\newtheorem{theorem}{Theorem}[section]
\newtheorem{corollary}{Corollary}[section]
\newtheorem{lemma}{Lemma}[section]
\newtheorem{proposition}{Proposition}[section]
\theoremstyle{definition}
\newtheorem{definition}{Definition}[section]
\newtheorem{remark}{Remark}[section]
\DeclareMathOperator{\R}{\mathbb{R}}
\DeclareMathOperator{\N}{\mathbb{N}}
\newcommand{\I}{{\mathcal I}}
\newcommand{\cI}{{\mathcal I}}
\newcommand{\cA}{{\mathcal A}}
\renewcommand{\phi}{\varphi}
\newtheorem{teo}{\textbf{Theorem}}[section]
\newtheorem{example}{\textit{Example}}[section]
\title[Independent sub-domains Reconstruction] 
{Reconstruction of independent sub-domains for a class of Hamilton Jacobi equations and its application to parallel computing}
\author[Adriano Festa]{}
\subjclass{Primary: 49L25, 65N55; Secondary: 49M27.}
 \keywords{Hamilton Jacobi Equations, Viscosity Solutions, Numerical Approximation, Parallel computing, Domain Decomposition.}
 \email{festa@ensta.fr}
\begin{document}
\maketitle

\centerline{\scshape Adriano Festa }
\medskip
{\footnotesize
 \centerline{ENSTA ParisTech}
   \centerline{828, Boulevard des Mar\'echaux,}
   \centerline{91120 Palaiseau, FRANCE}
} 
\bigskip


\begin{abstract}
A previous knowledge of the domains of dependence of an Hamilton Jacobi equation can be useful in its study and approximation. Information of this nature are, in general, difficult to obtain directly from the data of the problem. In this paper we introduce formally the concept of \emph{independent sub-domains}  discussing their main properties and  we provide a constructive implicit representation formula. Using such results we propose an algorithm for the approximation of these sets that will be shown to be relevant in parallel computing of the solution.
\end{abstract}

\section{Introduction}
A classic, powerful approach to optimal control problems consists in solving a verification partial differential equation in the Hamilton Jacobi form obtained using the Bellman's Dynamic Programming principle. One remarkable advantage of this approach, compared to the optimality conditions study, is the ability to provide global minima and closed-loop optimal controls; on the other hand, the study and the approximation of the value function of the problem which is an  unavoidable technical step, is often difficult. An exceptional achievement was made with the introduction of viscosity solutions, a weak notion of solution proposed by Crandall, Evans and Lions in the 80s, and the successive refinements (for a whole presentation of this subject referring to the monographs \cite{BarCap97,Bar94a}). A special benefit of this approach is to move the attention from the study of the trajectories of the problem to some geometric properties of the value function, opening in this way a wide range of techniques that, in the following, have shown a good efficiency. \par
In this paper we will consider a problem related to this. It consists in the detection of a collection of subsets, contained in the domain of the problem, where the value function restricted to each sub-domain is independent on the value of any other subset. This knowledge is useful for several reasons: it is related to stabilization problems, reachability sets reconstruction, or, as we will show later in this paper, in parallel procedures for a fast numerical resolution. This point is of special interest. In fact it is well known as the greatest limitation to the use of the Bellman approach in optimal control is due to the complexity of solving the Hamilton Jacobi equation associated; especially in high dimensional context. \par
The ``curse of dimensionality'', a meaningful definition proposed by the same R. Bellman, has been attacked in the last twenty years from several directions and various tools have been developed. Some examples of acceleration techniques are Fast Marching \cite{Set99,tsitsiklis1995} and Fast Sweeping methods \cite{Zhao05}. The strategy of these methods is to focus on the implicit order driven by the characteristics of the problem to compute each node only once, reaching convergence to the discrete solution in finite time. Despite the high efficiency of these techniques, born in the context of the resolution of the Eikonal case (where the organisation of the nodes is more evident), a generalization to more general equations is not trivial and still under investigation. Some proposals can be found  (Fast Matching  methods) in \cite{carlini2008, sethian2003ordered, cacace2011local} and (generalized Fast Sweeping methods) \cite{tsai2003fast}.\par
Another possible strategy is to decompose the domain in a collection of subsets, chosen in a number which is sufficient to lower the quantity of nodes to process in every sub-problem. Meanwhile to solve in parallel on every sub-domain. The difficulty in this idea is about  conditions to impose on the interface between two different subsets, or equivalently if some regions of overlapping are introduced, about the manner to handle the computation. Moreover, a technique of that kind requires an iterative process performed on every subdomain, with a consequent growth of the total complexity. For a whole dissertation on the subject of domain decomposition techniques (DD) we refer to the monograph \cite{valli1999domain},  for Hamilton Jacobi equations \cite{camilli1994domain, sun1993domain} and about a parallel version of the Fast Sweeping Methods \cite{Zhao07, DMG13}. \par
An alternative approach was proposed in \cite{zhou2003new} where the authors, passing to a quasi variational inequality formulation which is shown to be equivalent of the original problem, can handle a decomposition of the domain.\par
A new direction of research was opened by Ancona and Bressan in \cite{AncBre99} where they introduced the original concept of \emph{patchy feedback} with the intention of studying an asymptotic stability problem. Navasca and Krener in \cite{NavKre07} used these ideas to develop  a technique of reconstruction for the feedback solution in some special polynomial cases (\emph{patchy solutions}). Again, these elements inspired the work of Cacace et al. \cite{cacace2012patchy}. They propose, in a special class of Hamilton-Jacobi equations, a preliminary procedure called \emph{patchy decomposition}. This preliminary computation supplies a partition of the domain in sets which could be computed separately without any exchange of information between the interfaces. The result is archived using the multi-grid idea of pre-computing the problem on a coarse grid, solving the synthesis problem to have an optimal feedback control, and using it to detect a decomposition of the domain, accordingly with an approximation of the characteristics of the problem. They show in practice that the error added in this procedure is sufficiently small in some cases of interest. Our paper can be considered a development of this idea. By using some recent results in decomposition techniques we will state in a rigorous way the concept of \emph{independent sub-domain} (different concept from \emph{patchy subset}) and we will show an easy property which permit us an implicit way to reconstruct them without the delicate step about the feedback control  (we recall that, in general, there is no guarantee of convergence of an approximated optimal feedback to the continuous control). We will be able to prove the convergence of the technique and provide some error estimates. With this background we will enlarge significantly, with respect to the tests performed in \cite{cacace2012patchy}, the class of equations where the decomposition is appropriate. Indeed, through the paper there will be discussed the analogies and the differences with the \emph{patchy decomposition}.\par
The paper is organized as it follows: in Section 2 we will introduce, functionally to our purposes, the minimum property which is useful for the decomposition and the concept of \emph{independent sub-domains}. In Section 3 we propose an algorithm for their location. Main result of this part is the necessary condition contained in Proposition \ref{p:1} which characterizes the points of the grid belonging to a certain independent domain. Finally in Section 4 there will be discussed the application of the previous results to propose a parallel algorithm for the approximation  of the solution. The main benefit of our proposal will be claimed in Proposition \ref{conv} where the convergence of the technique and a bound for the error will be proved. Through some test examples we will show the good features of the proposal.

\section{Formulation and a decomposition property}
In this section there are presented some basic definitions and features which are necessary for the comprehension of the paper. In particular the decomposition property presented in details in \cite{FV14,FV14a} will be shortly recalled. \par
Let us first of all introduce the classical framework of an \emph{exit problem}. We will refer to the general structure of a \emph{differential game}. A generic \emph{optimal control problem} can be viewed as sub-case.\par

Let the dynamics be given by
\begin{equation*}
\left\{ \begin{array}{l}
\dot{y}(t)=f(y(t), a(t),b(t)), \quad a.e.\\
y(0)=x,\end{array}\right.
\end{equation*}
where $x\in\Omega$ is an open subset of $\R^n$, $a\in \mathcal{A} :=\{a:\R^+\rightarrow A, \,measureable\}$, and $b\in \mathcal{B} :=\{b:\R^+\rightarrow B, \,measureable\}$ with $A,B$ compact sets of $\R^m$. We take $f$ Lipschitz continuous with respect to the first variable and continuous with respect to $(x,a,b)$; this is enough to guarantee the existence of a solution $y_x(t,a(t),b(t))$ which will be called \emph{trajectory}. \\



The goal is to find the optimum (a $\sup-\inf$ optimum) over $\cA$, $\mathcal{B}$ of the functional
\begin{multline*} 
J_x(a,b):=\int_0^{\tau_x(a,b)}l(y_x(s,a(s),b(s)),a(s),b(s))e^{-\lambda s} ds\\ +e^{-\lambda \tau_x(a,b)} g(y_x(\tau_x(a,b))),\quad \lambda\geq 0,
\end{multline*}
where $\tau$ is the \emph{time of the first exit from the set $\Omega$} defined as 
$$  \tau_x(a,b):=\min\left\{t\in[0,+\infty)\;|\; y_x(t,a(t),b(t))\notin \Omega\right\}, $$
for the continuity of the trajectories $y_x(\tau_x,a(\tau_x),b(\tau_x))\in\overline{\Omega}$. \par
Typical hypothesis on the data, are stated: calling $l$ the \emph{running cost}, and  $g$ the \emph{exit cost}:
\begin{equation}\label{H0} \tag{H0}
\left.
\begin{array}{ll}
f:(\Omega,A,B)\rightarrow \R, & \text{ continuous function},\\
 & \text{ Lipschitz continuous in the first variable }\\
l:(\Omega,A,B)\rightarrow (\rho, +\infty], &\text{ is a strictly positive continuous function,}\\
&\text{ Lipschitz continuous in the first variable,}\\
g:\bar{\Omega}\rightarrow \R &\text{ is a continuous function.}
\end{array}\right\}
\end{equation}
Using the Elliot-Kalton's notion \cite{elliott1972existence} of \emph{non anticipating strategies}, we define the value function of this problem as
\begin{equation}\label{P1}  v(x):=\sup_{\phi\in\Phi}\inf_{a\in\mathcal{A}} J_x(a,\phi(a)),\end{equation}
where 
\begin{multline*}
\Phi:=\{\phi:\mathcal{A}\rightarrow \mathcal{B}: t>0, a(s)=\tilde{a}(s) \hbox{ for all }s\leq t \\ \hbox{ implies } \phi[a](s)=\phi[\tilde{a}](s) \hbox{ for all }s\leq t \}.
\end{multline*}
For a simpler presentation, we will assume the \emph{Isaacs' conditions} verified, then the value function of the problem exists, is unique and coincides with $v$. It is well known that such function  is a \emph{viscosity solution} of the problem
\begin{equation}\label{HJ}
\left\{
\begin{array}{ll}
\lambda v(x)+H\left(x,Dv(x)\right)=0& x\in\Omega\\
v(x)=g(x) & x\in\Gamma
\end{array} \right.
\end{equation}
 where the \emph{Hamiltonian} is defined as $H(x,p):=\min_{b\in\mathcal{B}}\max_{a\in\mathcal{A}}\{-f(x,a,b)\cdot p-l(x,a,b)\}$, and the $n-1$ dimensional set $\Gamma\in\partial \Omega$. 
To avoid a large number of technicalities and focus on our purposes, we will state as hypothesis:
\begin{equation}\label{lip}
\hbox{the problem \eqref{HJ} has an unique Lipschitz continuous viscosity solution } v(x). \tag{H1}
\end{equation}
This assumption will be essential in the following; conditions to ensure such regularity of the solution have been largely discussed in literature (just to cite some monographs \cite{BarCap97,Bar94a, CanSin04}).\par
%
\par
A key property of the value function that we will use in the following is the possibility to solve a collection of Hamilton-Jacobi equations obtaining the original solution as the point-wise minimum of such family. This property was discussed in the work \cite{FV14}; here we report the result and the main points of the proof for the benefit of the reader. \par

Consider a decomposition of the set $\Gamma$ as a union of a collection of subsets, i.e. $\Gamma:=\bigcup_{i\in\I}\Gamma_i$, with $\I:=\{1,...m\}\subset\N$. We call $v_i:\bar{\Omega}\rightarrow \R$ a Lipschitz continuous viscosity solution of the problem
\begin{equation}\label{HJD}
\left\{
\begin{array}{ll}
\lambda v_i(x)+H\left(x,Dv_i(x)\right)=0& x\in\Omega\\
v_i(x)=g_i(x) & x\in \Gamma
\end{array} \right.
\end{equation}
where $g_i:\Gamma\rightarrow \R$ is a regular function such that 
\begin{equation}\label{gcond}
\begin{array}{l}
g_i(x)=g(x), \hbox{ if }x\in\Gamma_i,\\
g_i(x)> g(x), \hbox{ otherwise}.
\end{array}
\end{equation}

Also in this case, we will ask the existence of a Lipschitz continuous solution of every equation \eqref{HJD}.\par
The limiting superdifferential $\partial^{L}v(x)$ of the continuous function $v(\cdot)$ at $x$ is defined as:
$$
\partial^{L}v(x)\,:=\,\{p\,|\, \exists \mbox{ sequences } \;p_{i}\rightarrow p \mbox{ and } x_{i}\rightarrow x \mbox{ s.t. } p_{i}\in D^{+}v(x_{i}) \mbox{ for each } i  \}\, ,
$$
where $D^+v(x)$ is the usual Fr\'echet superdifferential.\\
The \emph{active indexes set} is stated as
\begin{eqnarray*}
&& I(x)\,=\, \{ j \in \{1,\ldots,m\}\,|\,  v_{j}(x)= \underset{i\in\I}{\min}\;v_{i} (x)  \}, 
\quad \mbox{for each } x \in \Omega. 
\end{eqnarray*}
We are now ready to recall the decomposition result:

\begin{teo}\label{ttt:1}
Let be verified \eqref{H0}-\eqref{lip} and the Isaacs' conditions. 
Define the set $\Upsilon\subset \Omega$ as 
$ \Upsilon:=\{x\in\Omega| \;Card(I(x))>1\}$
(where $Card(A)$ is the cardinality of the set $A$) and the function $\bar v:\Omega\rightarrow \R$ as 
$$\overline{v}(x):=\min_{i\in\I} v_i(x).$$
Under the hypothesis
\begin{equation}\label{C)}
  \lambda \bar v(x)+H(x,\sum_{i\in I(x)}\alpha_i p_i)\leq 0, \tag{H2}
\end{equation}
where $p_{i}\in \partial^{L}v_{i}(x)$ for each $i \in I(x)$, $x\in \Upsilon$, and any convex combination $\{ \alpha_{i}\,|\, i \in I(x)\}$,
we have that $\bar v$ is the unique viscosity solution of the problem \eqref{HJ}.
\end{teo}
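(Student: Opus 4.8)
The plan is to show that $\bar v$ is simultaneously a viscosity super- and subsolution of \eqref{HJ} which attains the datum $g$ on $\Gamma$; the uniqueness built into \eqref{lip} (i.e. the comparison principle) then forces $\bar v = v$. The boundary condition is immediate from \eqref{gcond}: any $x \in \Gamma$ belongs to some $\Gamma_i$, so that $v_i(x) = g_i(x) = g(x)$, while $v_j(x) = g_j(x) \geq g(x)$ for every $j$; hence $\bar v(x) = \min_{i\in\I} v_i(x) = g(x)$.

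The supersolution inequality needs no extra assumption. Let $\phi \in C^1$ touch $\bar v$ from below at an interior point $x$ and pick any active index $j \in I(x)$. Since $\bar v \leq v_j$ with equality at $x$, the function $v_j - \phi$ also attains a local minimum at $x$; as $v_j$ solves \eqref{HJD} it is in particular an interior supersolution of the same equation, so $\lambda v_j(x) + H(x, D\phi(x)) \geq 0$, and $v_j(x) = \bar v(x)$ turns this into the supersolution inequality for $\bar v$.

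The crux is the subsolution inequality at an interior $x$, tested against an arbitrary $p \in D^{+}\bar v(x)$. If $Card(I(x)) = 1$, say $I(x) = \{j\}$, continuity of the $v_i$ gives $\bar v = v_j$ in a neighborhood of $x$, so $\bar v$ inherits the subsolution property of $v_j$ and $\lambda \bar v(x) + H(x, p) \leq 0$ holds with no further effort. If instead $x \in \Upsilon$, I would invoke the structural representation of the superdifferential of a pointwise minimum of Lipschitz functions: every $p \in D^{+}\bar v(x)$ can be written as a convex combination $p = \sum_{i \in I(x)} \alpha_i p_i$ with $p_i \in \partial^{L}v_i(x)$ and $\{\alpha_i \mid i \in I(x)\}$ convex weights. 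Granting this, hypothesis \eqref{C)} applied to exactly this combination gives $\lambda \bar v(x) + H\bigl(x, \sum_{i \in I(x)} \alpha_i p_i\bigr) \leq 0$, which is the required subsolution inequality.

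The main obstacle is precisely the superdifferential representation used in the last step: the pointwise minimum destroys differentiability on $\Upsilon$ and creates elements of $D^{+}\bar v(x)$ that are genuine combinations of the limiting gradients of the active functions. I would establish the inclusion $D^{+}\bar v(x) \subseteq \co\bigl(\bigcup_{i \in I(x)} \partial^{L}v_i(x)\bigr)$ by nonsmooth calculus — test $\bar v$ from above by a $C^1$ function with gradient $p$, localize so that only the indices in $I(x)$ are relevant, and express the resulting supergradient through the limiting superdifferentials $\partial^{L}v_i(x)$ (the limiting object being unavoidable because the $v_i$ are only Lipschitz). This is the one place that cannot be made unconditional, and it is exactly where \eqref{C)} is designed to intervene; once it is in hand, super- and subsolution together with the matching boundary datum close the proof through the uniqueness assumed in \eqref{lip}.
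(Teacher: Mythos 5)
Your proof follows essentially the same route as the paper's: boundary values read off from \eqref{gcond}, the classical supersolution argument via an active index, the subsolution inequality obtained by representing any $p\in D^{+}\bar v(x)$ as a convex combination $\sum_{i\in I(x)}\alpha_i p_i$ with $p_i\in\partial^{L}v_i(x)$ and then invoking \eqref{C)}, and closure by uniqueness. The only difference is presentational: the superdifferential inclusion you propose to establish by hand is exactly the max rule for limiting subdifferentials of Lipschitz functions applied to $-\bar v$, which the paper simply cites (Vinter's book), so that step is a known unconditional fact and \eqref{C)} is the sole genuine hypothesis, entering only after the representation.
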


\begin{proof}
We know that $\overline{v}$ always verifies the boundary conditions from the definition of value function and \eqref{gcond}. 
We show now that $\overline{v}$ is both subsolution and supersolution in $\Omega$ of the problem \eqref{HJ}. For uniqueness  the thesis follows. \par
The proof of $\bar v(x) $ supersolution is classic in literature. The property of subsolution is less trivial. If $x\notin \Upsilon$, i.e. $I(x)$ contains a single index value $j$, the property is directly verified; so $x\in\Upsilon$. Now, $v_{j}(\cdot)$ is Lipschitz continuous on a neighbourhood of $x$ for each $j \in I(x)$. Since $p\in D^{+}\bar v(x)$, it is certainly the case that $p \in \partial^{L} \bar{v}(x)$. Using the property that $\bar v(x')$ coincides with $\max \{ v_{j}(x')\,|\, j \in I(x')   \}$ for $x'$ in some neighbourhood of $x$, we deduce from the max rule for limiting subdifferentials of Lipschitz continuous functions  (see, e.g. \cite{Vinter00}) applied to $-\bar v(\cdot)$ the following representation for $p$:
$$
p =\sum_{j\in I(x)}\alpha_{j}p_{j}\,,
$$
for some convex combination $\{\alpha_{j}\,|\, j \in I(x)\}$ and vectors $p_{j} \in \partial^{L}v_{j}(x)$, $j\in I(x)$. But then, by  \eqref{C)},
$$
\lambda \bar v(x)+H(x,p)= \lambda \bar v(x)+H\left(x,\sum_{j \in I(x)}\alpha_{j} p_{j}\right) 
\,\leq 0\,.
$$
  This shows that $\overline{u}(x)$ is a subsolution and concludes the proof.
  \end{proof}

\begin{remark}
 It is quite direct to show that the request \eqref{C)} is always verified with the presence of a convex Hamiltonian. As consequence any optimal control problem is  included in our framework (in a optimal control problem the Hamiltonian associated is always convex). To pass to this special case, it is sufficient to restrict the set $B$ to a singleton.
\end{remark}

Let us now define the concept of independent sub-domains.

\begin{definition}
A  subset $\Sigma\subseteq\overline{\Omega}$ is an \emph{independent sub-domain} of the problem \eqref{P1} if, given a point $x\in\Sigma$ and an optimal control $(\overline{a}(t),\bar \phi(\overline{a}(t))$ (i.e. $J_x(\overline{a},\bar \phi(\bar{a}))\leq J_x(a,\bar \phi(a))$ for every choice of $a\in\cA$, and $J_x(\overline{a},\bar \phi(\bar{a}))\geq J_x(\bar a,\phi(\bar a))$ for any $\phi\in\Phi$), the trajectory $y_x(\bar a(t),\bar \phi(\bar a(t)))\in\Sigma$ for $t\in[0,\tau_x(\bar a, \bar \phi(\bar a))]$.
\end{definition}

It is possible to establish a link between the decomposition result and the concept of independent sub-domain. In particular we show that Theorem \ref{ttt:1} provides a constructive way to build a independent sub-domains  decomposition of $\Omega$.

\begin{proposition}\label{pp:1}
Let be verified \eqref{H0}, \eqref{lip}, \eqref{C)} and the Isaacs' conditions.
Given a collection of $n-1$ dimensional subsets $\{\Gamma_i\}_{i=1,...,m}$ such that $\Gamma=\cup_{i=1}^m \Gamma_i$, the sets defined as 
\begin{equation}\label{sigmadef}
\Sigma_i:=\left\{x\in\overline{\Omega}\;|\; v_i(x)=v(x)\right\}, \quad i=1,...,m,
\end{equation}
where $v_i$ and $v$ are defined accordingly to Theorem \eqref{ttt:1}, are independent sub-domains of the problem \eqref{P1}.
\end{proposition}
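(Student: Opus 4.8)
The plan is to read each $v_{i}$ as the value function of the \emph{same} exit problem in which only the boundary cost $g$ has been replaced by $g_{i}$, and then to follow a single optimal trajectory while monitoring the gap $v_{i}-v$. By Theorem~\ref{ttt:1} we already know that $v=\min_{i}v_{i}$, so $v\le v_{i}$ on $\overline\Omega$ and $\Sigma_{i}=\{x : v_{i}(x)=v(x)\}$; the whole point is to show that this gap, which vanishes at a starting point $x\in\Sigma_{i}$, continues to vanish along the optimal motion issued from $x$.

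First I would fix $x\in\Sigma_{i}$ and choose an optimal control pair $(\bar a,\bar\phi)$ \emph{for the $v_{i}$-problem}, producing a trajectory $\bar y(\cdot)$ with first exit time $\bar\tau$ and exit point $\bar y(\bar\tau)\in\Gamma$. Because $f$ and $l$ are common to the two problems, the two functionals evaluated on this pair differ only through the exit term,
\[
J_{x}(\bar a,\bar\phi(\bar a))=v_{i}(x)-e^{-\lambda\bar\tau}\bigl(g_{i}(\bar y(\bar\tau))-g(\bar y(\bar\tau))\bigr).
\]
Evaluating the original functional at $(\bar a,\bar\phi)$ can only exceed the value (immediate in the control case, where $v$ is an infimum), $v(x)\le J_{x}(\bar a,\bar\phi(\bar a))$, so using $v(x)=v_{i}(x)$ we obtain $g_{i}(\bar y(\bar\tau))-g(\bar y(\bar\tau))\le 0$. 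Together with $g_{i}\ge g$ and the strict inequality of \eqref{gcond} this forces $\bar y(\bar\tau)\in\Gamma_{i}$ and, at the same time, turns the last inequality into an equality $v(x)=J_{x}(\bar a,\bar\phi(\bar a))$, so the very same pair is optimal for the original problem too.

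The second step is to propagate optimality in time. Since $(\bar a,\bar\phi)$ is optimal for both problems, the Dynamic Programming Principle (principle of optimality) applies to $v$ and to $v_{i}$ simultaneously along $\bar y$, giving for every $t\in[0,\bar\tau]$
\[
v(x)=\int_{0}^{t}l\,e^{-\lambda s}\,ds+e^{-\lambda t}v(\bar y(t)),\qquad
v_{i}(x)=\int_{0}^{t}l\,e^{-\lambda s}\,ds+e^{-\lambda t}v_{i}(\bar y(t)).
\]
Subtracting the two identities and using $v(x)=v_{i}(x)$ leaves $e^{-\lambda t}\bigl(v_{i}(\bar y(t))-v(\bar y(t))\bigr)=0$, that is $\bar y(t)\in\Sigma_{i}$ for all $t\in[0,\bar\tau]$ (the endpoint lying in $\Sigma_{i}$ as well, since $v_{i}=g_{i}=g=v$ on $\Gamma_{i}$). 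This is precisely the defining property of an independent sub-domain.

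I expect the genuine difficulty to be game-theoretic rather than conceptual. In the pure optimal control case ($B$ a singleton, as in the Remark) every inequality above is elementary: $v$ is an infimum over $\cA$, so $v(x)\le J_{x}(\bar a)$ is immediate, and the principle of optimality is classical. For the full differential game one must instead justify $v(x)\le J_{x}(\bar a,\bar\phi(\bar a))$ and the two Dynamic Programming identities in the Elliott--Kalton framework, where ``optimal control'' means a saddle pair and the bookkeeping of non-anticipating strategies is delicate; this is exactly the step where the Isaacs condition enters. A minor point worth stating is that the construction selects the \emph{$v_{i}$-optimal} trajectory inside $\Sigma_{i}$, so ``an optimal control'' in the definition is to be read as the one realizing $v_{i}$, which we have shown is also optimal for $v$.
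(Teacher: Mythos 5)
Your argument runs in the opposite direction from the paper's, and the difference is not cosmetic. The paper proves the confinement property for an \emph{arbitrary} optimal pair of the original problem \eqref{P1}: it assumes that such a pair's trajectory reaches some $\bar x\notin\Sigma_i$ at a time $\bar t>0$, writes the Dynamic Programming Principle for both $v$ and $v_i$ over $[0,\bar t]$ along that pair, and uses $v_i(\bar x)>v(\bar x)$ to force $v_i(x)>v(x)$, contradicting $x\in\Sigma_i$. You instead start from an optimal pair of the \emph{auxiliary} problem with exit cost $g_i$, and all three of your steps (exit through $\Gamma_i$, transfer of optimality to $J_x$, propagation of $v_i=v$ along the trajectory) apply only to pairs of that type. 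Since a saddle pair of $J_x$ need not be optimal for the $g_i$-problem, what you prove is the existential statement ``through every $x\in\Sigma_i$ there is \emph{some} optimal trajectory of \eqref{P1} that stays in $\Sigma_i$'', whereas the definition of independent sub-domain, as written and as the paper's contradiction argument treats it, requires this of \emph{every} optimal pair. You noticed the discrepancy but called it minor; it is in fact the crux. (It cuts both ways: at a point of $\Upsilon$ --- e.g.\ on a diagonal in Example \ref{ex-1}, which lies in two sets $\Sigma_i$ simultaneously --- the original problem has two optimal trajectories, and each of them leaves one of the two sets, so only your weaker existential reading survives there; but then your write-up should say explicitly that it proves a weaker assertion than the universally quantified one, rather than treat the two as interchangeable.)

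The second gap is the one you flagged but did not close: for genuine differential games your first step has no proof. You need $v(x)\le J_x(\bar a,\bar\phi(\bar a))$ for a saddle pair $(\bar a,\bar\phi)$ of the $g_i$-game, and in the sup--inf framework this does not follow by evaluation: inserting $\bar\phi$ into $v(x)=\sup_{\phi}\inf_{a}J_x(a,\phi(a))$ only yields $v(x)\ge\inf_a J_x(a,\bar\phi(a))$, and that infimum sits \emph{below} $J_x(\bar a,\bar\phi(\bar a))$, so no comparison results. Worse, from $g\le g_i$ and $x\in\Sigma_i$ one gets the \emph{opposite} bound: the auxiliary functional dominates the original one, hence $J_x(\bar a,\bar\phi(\bar a))\le v_i(x)=v(x)$; turning this into the equality you need requires a DPP argument in the Elliott--Kalton setting, which is exactly the machinery the paper's proof is built around. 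So, as written, your proposal is a complete argument only for optimal control problems ($B$ a singleton) and only for the existential version of the claim. Within that scope it is correct and attractively direct --- it yields, beyond confinement, that the optimal exit occurs through $\Gamma_i$ and that $g_i$-optimal controls are optimal for the original problem, facts the paper's contradiction argument never exhibits --- but it is not a proof of the universal confinement property that the definition asks for and that the paper's own proof (by contradiction along an arbitrary optimal pair of \eqref{P1}) is designed to deliver.
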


\begin{proof}
The proof obtained for contradiction, is made using the Dynamical Programming Principle (cf. \cite{BarCap97}).\\
For a fixed $i$ consider a point $x\in\Sigma_i$. Let us then assume the trajectory, for an optimal control $(\bar a,\bar \phi(\bar a))$ for the original problem, $y_x(\overline{a}(\overline{t}),\bar\phi(\overline{a}))=\overline{x}\notin\Sigma_i$ for a certain $\overline{t}\in[0,\tau_x(\overline{a}(\overline{t}),\bar\phi(\overline{a}))]$. If $\overline{t}=0$ the contradiction comes directly from the definition of $\Sigma_i$. If $\overline{t}>0$  we recall (\emph{Dynamical Programming Principle}) 
\begin{multline*}
v(x) =\sup_{\phi\in\Phi}\inf_{a\in\cA} \left\{\int_0^{\overline{t}}l(y_x(a(s),\phi(a(s))),a(s),\phi(a(s)))e^{-\lambda s} ds \right.\\ \left.+e^{-\lambda \overline{t}} v\left(y_x(a(\overline{t}),\phi(a(\bar{t})))\right)\right\},
\end{multline*}
an analogue formula is obviously valid also for $v_i(x)$. Recalling $v_i(\overline{x})>v(\overline{x})$, 
\begin{multline*}
v_i(x)=\\
\sup_{\phi\in\Phi}\inf_{a\in\cA} \left\{\int_0^{\overline{t}}l(y_x(a(s),\phi(a(s))),a(s),\phi(a(s)))e^{-\lambda s} ds +e^{-\lambda \overline{t}} v_i\left(y_x(a(\overline{t}),\phi(a(\bar{t}))\right)\right\}\\
>\inf_{a\in\cA}\left\{\int_0^{\overline{t}}l(y_x(a(s),\bar\phi( a(s))),{a}(s),\bar\phi(a(s)))e^{-\lambda s} ds 
+e^{-\lambda \overline{t}} v_i\left(y_x({a}(s),\bar\phi( a(s))\right)\right\} \\
= \int_0^{\overline{t}}l(y_x(\bar a(s),\bar\phi( \bar a(s))),{\bar a}(s),\bar\phi(\bar a(s)))e^{-\lambda s} ds +e^{-\lambda \overline{t}} v_i\left(\bar x\right)\\
\geq \left\{\int_0^{\overline{t}}l(y_x(\bar{a}(s),\bar\phi(\bar a(s))),\bar{a}(s))e^{-\lambda s} ds +e^{-\lambda \overline{t}} v(\bar x)\right\}=v(x),\end{multline*}
then $v_i(x)>v(x)$, which contradicts again the definition \eqref{sigmadef}.
\end{proof}
\par
That property of the trajectories will play an important role in the following; it will guarantee the absence of crossing information through the boundary of every independent sub-domain, or using different words, the solution of the problem \eqref{HJ} in each sub-domain will not depend on the solution in other sub-domains.\par
A feature easy to derive from Proposition \ref{pp:1} is the connexion of the sets:
\begin{corollary}
Let be verified \eqref{H0}, \eqref{lip} and \eqref{C)} and the Isaacs' conditions.
If $\Gamma_i$ is connected, the respective set $\Sigma_i$ defined in \eqref{sigmadef} is also  connected.
\end{corollary}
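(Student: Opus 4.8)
The plan is to exploit the trajectory-invariance property established in Proposition \ref{pp:1}, combined with the hypothesis that $\Gamma_i$ is connected, through the elementary fact that a connected subset of a disconnected set must be contained entirely in one of the pieces of any separation.

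First I would record two preliminary observations. One is that $\Gamma_i\subseteq\Sigma_i$: on $\Gamma_i$ the boundary data coincide, $g_i=g$ by \eqref{gcond}, so $v_i=g_i=g=v$ there, and every point of $\Gamma_i$ thus satisfies the defining relation \eqref{sigmadef}. The other is that, since $v$ and $v_i$ are continuous, $\Sigma_i=\{x\,|\,v_i(x)-v(x)=0\}$ is a closed subset of $\overline{\Omega}$; this is not strictly needed but clarifies the topology.

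The central step is to show that each point of $\Sigma_i$ is joined to $\Gamma_i$ by a connected set lying entirely in $\Sigma_i$. Fix $x\in\Sigma_i$ and let $(\bar a,\bar\phi(\bar a))$ be an optimal pair for the original problem at $x$. By Proposition \ref{pp:1} the associated trajectory, which I denote $y_x(\cdot)$, satisfies $y_x(t)\in\Sigma_i$ for all $t\in[0,\tau_x]$, with $\tau_x=\tau_x(\bar a,\bar\phi(\bar a))$; hence $C_x:=\{y_x(t)\,|\,t\in[0,\tau_x]\}$ is a connected subset of $\Sigma_i$, being the continuous image of an interval. Its exit point $\bar x:=y_x(\tau_x)$ lies on $\Gamma\subseteq\partial\Omega$, and since $\bar x\in\Sigma_i$ we have $v_i(\bar x)=v(\bar x)$, i.e. $g_i(\bar x)=g(\bar x)$, which by the strict inequality in \eqref{gcond} forces $\bar x\in\Gamma_i$. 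Thus $C_x$ is a connected subset of $\Sigma_i$ meeting $\Gamma_i$. Connectedness of $\Sigma_i$ then follows at once: if $\Sigma_i=U\cup V$ were a separation into disjoint, nonempty, relatively open sets, then the connected set $\Gamma_i$ would lie in one of them, say $\Gamma_i\subseteq U$; choosing any $x\in V$, the connected trajectory $C_x$ would have to lie wholly in $U$ or wholly in $V$, and as $C_x\cap\Gamma_i\neq\emptyset$ it lies in $U$, contradicting $x\in C_x\cap V$.

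I expect the delicate point to be the exit-point claim rather than the topological wrap-up. One must guarantee that the optimal trajectory actually leaves $\Omega$ in finite time, so that $\bar x$ exists, which should follow from the exit-problem structure together with the positivity of the running cost $l$; and one must then locate $\bar x$ on $\Gamma_i$, which is the real content, obtained by combining the invariance of Proposition \ref{pp:1} with the fact that $g_i>g$ off $\Gamma_i$. In the genuine differential-game setting I would also verify, exactly as in the proof of Proposition \ref{pp:1}, that working with a single optimal pair $(\bar a,\bar\phi(\bar a))$ is legitimate for this argument.
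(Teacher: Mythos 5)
Your proof is correct and takes essentially the same route as the paper's: the paper joins any two points of $\Sigma_i$ through their optimal trajectories, which stay in $\Sigma_i$ by Proposition~\ref{pp:1} and terminate on the connected set $\Gamma_i$. You have simply made explicit the details the paper leaves implicit, namely that $\Gamma_i\subseteq\Sigma_i$, that the exit point must land on $\Gamma_i$ because of the strict inequality in \eqref{gcond}, and the final topological separation argument.
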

\begin{proof}
  If $\Gamma_i$ is connected we can always join two points $x$, $y$ of the set using the respective optimal trajectories, which, for Proposition \ref{pp:1} are contained in the set.
\end{proof}

\begin{figure}[t]\label{f:ex}
\begin{center}
\includegraphics[height=6cm]{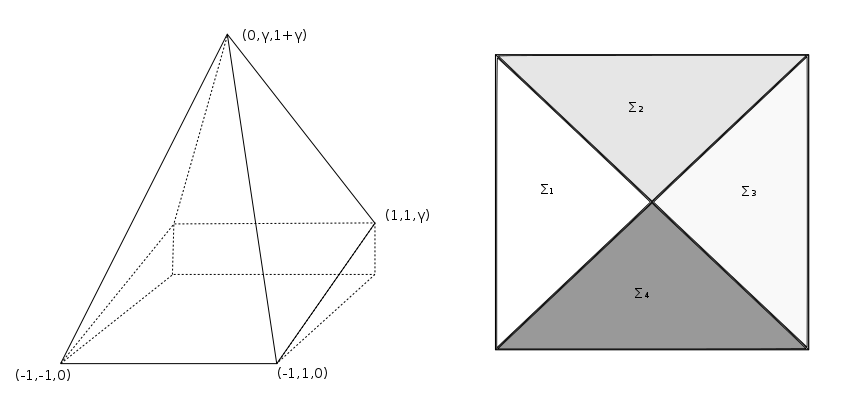}
\caption{Example \ref{ex-1}, the auxiliary solution $u_1$ and the independent sub-domains decomposition.}
\end{center}
\end{figure}

Let us to give a simple example of an independent sub-domains decomposition:
\begin{example}\label{ex-1}

The equation considered is, with $\Omega:=(-1,1)\times(-1,1)$,
\begin{equation*}
\left\{
\begin{array}{ll}
\max_{a\in B(0,1)}\{a\cdot Dv(x)\}=1 & x\in \Omega\\
v(x)=0 & x_1\in\partial \Omega.
\end{array} \right.
\end{equation*}
 Stated $\partial\Omega=\Gamma:=\cup_i \Gamma_i=\cup [(\pm 1 ,\pm 1),(\pm 1,\pm 1)]$, we associate to the $\Gamma_1:=[(-1,-1), (-1,1)]$ the function $g_1:\partial\Omega\rightarrow \R$ defined as
\begin{equation*}
\left\{
\begin{array}{ll}
g_1(x):=0 & x\in \Gamma_1\\
g_1(x):=\gamma (1+x_2) & x\in\Gamma\setminus \Gamma_1,
\end{array} \right.
\end{equation*}
for a chosen $\gamma\in \R^+$. The other $g_i$s will be defined in a symmetric manner. It is possible to verify that the unique viscosity solution of such a problem is 
$$ v_1(x)=(1+\gamma)-\max(|x_1-\gamma|,|x_2|).$$ 
Finally the original value function $v(x)=1-\max(|x_1|,|x_2|)$ is recovered as $v(x)=\min\limits_{i=1,...,4}v_i(x)$. The decomposition in independent sub-domains obtained are shown in Figure \ref{f:ex}.
\end{example}

\section{Independent sub-domains reconstruction}

In this section we will introduce a numerical technique for the approximation of the independent sub-domains based on the results of the previous section. The technique is not related to a special numerical scheme, but it needs an \emph{a priori} bound for the approximation; this is necessary to have an important property of inclusion of the sets which will play a role in the successive error analysis. As example of numerical scheme we will make reference to a semiLagrangian solver, but the procedure can be easily extended to finite difference schemes, finite volumes etc. For further details about semiLagrangian techniques we refer to the monograph by Falcone and Ferretti \cite{FalFer14}.\par
Let us consider a structured grid of $\Omega$ made by a family of simplices $S_j$, such that $\overline{\Omega}\in\cup_j S_j$. Name $x_i$, $i=1, ... , N$ the nodes of the triangulation,
\begin{equation} \Delta x :=\max_j {\it diam}(S_j)
\end{equation}
the size of the mesh ($diam(S)$ is the diameter of the set $S$). Let be $G$ the set of the internal nodes of the grid and, consequently, $\partial G$ is the set of its boundary points; in the case of a bounded $\Omega$ we call also $\Phi$ the nodes corresponding to the set $\R^n\setminus\overline{\Omega}$, those nodes typically act as \emph{ghost nodes}. We remark that this discretization space includes the classical case of  regular meshes. \par
We map all the values at the nodes in  $V=(V(1),...,V(N))$. By a standard semiLagrangian discretization \cite{BarFal90, FalFer14} of \eqref{HJ}, it is possible to obtain the following scheme in fixed point form
\begin{equation}\label{SL}
V=T(V),
\end{equation}
where $F:\R^N\rightarrow \R^N$ is defined component-wise by 
\begin{equation*}\label{Tdef}
[T(V)]_i=\left\{
\begin{array}{ll}
\max\limits_{b\in B}\min\limits_{a\in A}\left\{\frac{1}{1+\lambda h}\mathbb{I}[V](x_i-hf(x_i,a,b))-hl(x_i,a,b) \right\}& x_i\in G,\\
g(x_i) & x_i\in \partial G,\\
+\infty & x_i\in\Phi.
\end{array} \right.
\end{equation*}

The discrete value function $V$ is extended on the whole space $\Omega$ by a linear $n-$dimensional interpolation, represented by the operator $\mathbb{I}$, as described in \cite{FalFer98,Bardi1999numerical}.\\
The variable $h$ corresponds to a \emph{fictitious} time discretization with the purpose to imitate the behaviour of the characteristics of the problem. The minimum over $A$ and the maximum over $B$ is evaluated by direct comparison using a discrete version of the control space $A,B$. Generally the fixed point of the equation \eqref{SL} is found through the iterative map $V^{n+1}:=T(V^n)$ which is shown to be a contraction.\par
It is important to recall the following  result of convergence for the semiLagrangian scheme. The proof can be found in \cite{Sor98,Bardi1999numerical} for the case of differential games and in \cite{CapIsh84,Fal87} for optimal control problems. 

\begin{theorem}\label{t:1}
Let $v$ and $V$ be the solutions of, respectively, equation \eqref{HJ} and \eqref{SL}. Assume verified \eqref{H0} and \eqref{lip} then
$$ ||v-V||_{\infty}\leq C (\Delta x)^q, $$
where $C$ is a positive constant independent from $\Delta x$,  $q\in \R^+$ depending on the regularity of the problem. 
\end{theorem}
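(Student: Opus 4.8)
The plan is to treat Theorem~\ref{t:1} as a classical convergence--with--rate statement for monotone semiLagrangian schemes and to reconstruct its proof along the three standard lines: well-posedness together with $L^\infty$-stability of the scheme, a quantified consistency estimate, and a comparison argument between the viscosity solution $v$ and the piecewise-linear extension $\mathbb{I}[V]$ of the discrete solution obtained by the Crandall--Lions doubling of variables (cf. \cite{CapIsh84,Fal87,Bardi1999numerical}). The exponent $q$ will emerge at the very end from balancing the two discretization parameters, and its dependence on the regularity of the data is exactly the reason it is left unspecified in the statement.

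First I would verify that $T$ is a contraction on $\R^N$ with factor $1/(1+\lambda h)<1$. This rests on three structural facts: the interpolation operator $\mathbb{I}$ has nonnegative weights summing to one, so $V\mapsto\mathbb{I}[V]$ is a nonexpansive and monotone averaging; the $\min_a\max_b$ combination preserves both monotonicity and the contraction factor; and the discount prefactor supplies the strict contraction. Banach's theorem then gives existence and uniqueness of the fixed point $V$, while comparing $V$ with the constant super- and sub-solutions built from the bounds in \eqref{H0} yields an $L^\infty$ bound on $V$ independent of $\Delta x$ (discrete stability). Monotonicity and stability, joined to the consistency below, already force $\mathbb{I}[V]\to v$ locally uniformly through the Barles--Souganidis theorem; the content of Theorem~\ref{t:1} is the rate.

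The heart of the argument is the comparison estimate. Writing the scheme in residual (incremental-quotient) form and inserting a smooth function $\psi$, a Taylor expansion of the characteristic foot $x_i-hf(x_i,a,b)$ together with the interpolation defect identifies the local truncation error: a time-stepping part of order $h$ and a spatial part controlled, through the Lipschitz interpolation bound, by a term of order $\Delta x/h$ measured in the units of the Hamilton--Jacobi operator. To transfer this to an $L^\infty$ bound I would double the variables: set $\Psi(x,y)=v(x)-\mathbb{I}[V](y)-|x-y|^2/(2\ep)$, take a maximum point $(x_\ep,y_\ep)\in\overline\Omega\times\overline\Omega$, and play the viscosity subsolution inequality for $v$ at $x_\ep$ against the discrete supersolution inequality for $\mathbb{I}[V]$ at $y_\ep$, using the common penalization slope $p=(x_\ep-y_\ep)/\ep$ as the shared gradient. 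The Lipschitz bound \eqref{lip} gives $|x_\ep-y_\ep|\le C\ep$, so the penalization residual is $O(\ep)$; adding the consistency error produces a bound of the form $\sup_{\overline\Omega}(v-\mathbb{I}[V])\le C\,(\ep+h+\Delta x/h)$, and the symmetric choice of $\Psi$ controls $\sup(\mathbb{I}[V]-v)$. Letting $\ep\to 0$ and optimizing $h\sim\sqrt{\Delta x}$ yields $q=1/2$ in the purely Lipschitz case.

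I expect the main obstacle to be precisely this last, comparison, step: because \eqref{lip} only grants Lipschitz regularity, $v$ cannot be differentiated and the PDE information can be extracted only through test functions, which is what the doubling of variables supplies at the price of the penalization parameter $\ep$. Two subsidiary difficulties require care. First, the boundary: one must match the Dirichlet data on $\Gamma$ and neutralize the $+\infty$ ghost values on $\Phi$, so that the maximum point of $\Psi$ cannot escape to $\partial\Omega$ and contaminate the estimate. Second, the game structure: the order of $\min_a$ and $\max_b$ must be preserved when passing to the doubled variables, so that the inequalities invoked are genuinely the sub/supersolution inequalities for the same Hamiltonian $H$. Finally, under additional regularity (semiconcavity of $v$) the interpolation defect improves to order $(\Delta x)^2$, which lifts the spatial contribution and raises $q$ accordingly, accounting for the regularity-dependent exponent asserted in the theorem.
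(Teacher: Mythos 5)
Your overall architecture (monotone contraction scheme, $L^\infty$-stability, quantified consistency, Crandall--Lions doubling of variables) is the right one: the paper itself does not prove Theorem~\ref{t:1} at all, but quotes it from the literature (\cite{Sor98,Bardi1999numerical} for games, \cite{CapIsh84,Fal87} for control problems), and those references proceed essentially along the lines you describe. So the approach is not in question; the problem is the execution of the comparison step, which is where the rate actually comes from.

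There is a genuine quantitative gap there. When you play the discrete dynamic programming inequality at $y_\varepsilon$ against the slope $p=(x_\varepsilon-y_\varepsilon)/\varepsilon$, the maximality of $\Psi$ in the $y$-variable gives $\mathbb{I}[V](y_\varepsilon-hf)\geq \mathbb{I}[V](y_\varepsilon)-h\langle p,f\rangle-h^2|f|^2/(2\varepsilon)$, so the discrete supersolution inequality carries an unavoidable error of order $h/\varepsilon$ (the interpolation defect produces a further $\varepsilon$-dependent term of the same kind). The bound you can actually reach therefore has the form $C\left(\varepsilon+h/\varepsilon+\Delta x/h\right)$, not $C\left(\varepsilon+h+\Delta x/h\right)$: you cannot ``let $\varepsilon\to0$'' independently, you must optimize $\varepsilon\sim h^{1/2}$, and this trade-off is precisely why the time-discretization error for a merely Lipschitz $v$ is $O(h^{1/2})$ and not $O(h)$. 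Taken literally, your estimate would prove a first-order-in-$h$ rate under \eqref{lip} alone, which is false. With the correct coupling one obtains $C\left(h^{1/2}+\Delta x/h\right)$ for the discounted control problem, optimized at $h\sim(\Delta x)^{2/3}$ and giving $q=1/3$ (this is exactly the paper's remark following the theorem), or Soravia's game estimate $Ch^{1/2}\left(1+\left(\Delta x/h\right)^2\right)$, giving $q=1/2$ with the choice $\Delta x=h$; your balance $h\sim\sqrt{\Delta x}$ with claimed $q=1/2$ matches neither. A minor additional point: \eqref{H0} allows $\lambda\geq0$, and for $\lambda=0$ your contraction factor $1/(1+\lambda h)$ degenerates to $1$, so the fixed-point and stability arguments as you state them need $\lambda>0$ (the cited results either assume this or pass through the Kruzkov transform).
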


For differential games with a Lipschitz continuous solution,  a possible estimate is 
$$ \| v-V\|_{\infty}\leq C h^{\frac{1}{2}}\left(1+\left(\frac{\Delta x}{h}\right)^2\right).$$
If the quantity $\frac{\Delta x}{h}=1$, we have the relation described in Theorem \ref{t:1} with $q=1/2$.
The constant $C$ depends on the data of the problem and can be estimated. \par
In the case of an optimal control problem with $\lambda>0$, a possible convergence bound  is the following 
$$  \| v-V\|_{\infty}\leq 2(M_v+M_{v_h}) h^{\frac{1}{2}}+\left(\frac{L_l}{\lambda(\lambda-L_f)}\frac{\Delta x}{h}\right)$$
with $M_v,M_{v_h}$ maxima of the absolute value of the continuous  and semidiscrete solution and $L_l,L_f$ Lipschitz constants of dynamic and running cost. Then, in this case, for $h^2=\Delta x^3$, $C=2(M_v+M_{v_h})+\frac{L_l}{\lambda (\lambda-L_f)}$ and $q=\frac{1}{3}$.\par
Other examples of error estimates can be found in literature, even of high order (i.e. $q>1$) in some smooth cases \cite{FalFer14}. 
\par

Using the numerical scheme described above we can obtain an approximation of the solution 
of every decomposed problem \eqref{HJD}; these discrete solutions are called, in analogy with the continuous case, $V_i$ for $i\in\cI$. \par

A simple observation brings us to the following Lemma:
\begin{lemma}\label{lemma:1}
Let be verified \eqref{H0}, \eqref{lip}, \eqref{C)} and the Isaacs' conditions.
If a node $x_j\in\Sigma_i$ then there exists a $C>0$ independent from $\Delta x$ and a $q\in\R^+$ s.t. $|V_i(j)-v(x_j)|\leq C(\Delta x)^q$. The parameters $C$ and $q$ are the same than in Theorem \ref{t:1}.
\end{lemma}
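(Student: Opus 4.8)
The plan is to reduce the statement to a direct application of the convergence result of Theorem \ref{t:1} to the $i$-th decomposed problem, using only the defining property of $\Sigma_i$. First I would observe that each auxiliary solution $v_i$ is, by construction, a Lipschitz continuous viscosity solution of a problem of the form \eqref{HJD} which shares with \eqref{HJ} exactly the same Hamiltonian $H$ and the same structural data $f$, $l$; only the boundary datum changes from $g$ to $g_i$. Since \eqref{H0} and \eqref{lip} are assumed to hold for each of these problems, Theorem \ref{t:1} applies verbatim to the pair $(v_i,V_i)$ and yields $\|v_i-V_i\|_{\infty}\le C(\Delta x)^q$, with the constant $C$ and exponent $q$ furnished by that theorem.

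The second ingredient is the definition \eqref{sigmadef} of the independent sub-domain: for every point $x\in\Sigma_i$ --- in particular for the node $x_j$ --- one has $v_i(x_j)=v(x_j)$. Combining this identity with the global bound above, and recalling that the interpolated discrete solution satisfies $V_i(x_j)=V_i(j)$ at the grid nodes, I would simply estimate
\[
|V_i(j)-v(x_j)|=|V_i(j)-v_i(x_j)|\le \|V_i-v_i\|_{\infty}\le C(\Delta x)^q,
\]
which is exactly the claimed inequality. No step here requires more than the definition of $\Sigma_i$ and the already established convergence theorem.

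The only point requiring genuine care --- and the one I would expect to be the main (mild) obstacle --- is the uniformity of the constant $C$ across the family $\{v_i\}_{i=1}^m$. The constant produced by Theorem \ref{t:1} depends on the Lipschitz constants of $f$ and $l$ and on bounds for the continuous and semidiscrete solutions; the former are common to all sub-problems, while the latter may vary with the boundary datum $g_i$. Since the decomposition involves only finitely many indices $i\in\cI$, taking the maximum of the corresponding constants preserves independence from $\Delta x$, so the estimate continues to hold with a single constant $C$ and the common exponent $q$. This justifies the assertion that $C$ and $q$ may be taken to be the same as in Theorem \ref{t:1}.
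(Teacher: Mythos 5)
Your proof is correct and follows essentially the same route as the paper's: apply Theorem \ref{t:1} to the $i$-th decomposed problem \eqref{HJD} and use the defining identity $v_i(x_j)=v(x_j)$ from \eqref{sigmadef} (the paper phrases this as a triangle inequality whose second term vanishes, citing Proposition \ref{pp:1}). Your additional remark on taking the maximum of the constants over the finitely many indices $i\in\I$ is a sensible refinement of a point the paper leaves implicit, but it does not change the argument.
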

\begin{proof}
It is sufficient observe that  $|V_i(j)-v(x_j)|\leq |V_i(j)-v_i(x_j)|+|v_i(x_j)-v(x_j)|$. Proposition \ref{pp:1} and Theorem \ref{t:1} give the estimate.
\end{proof}

We can establish a necessary condition for the nodes of $G$ to belong to a fixed independent sub-domain $\Sigma_i$. Let be $B(x,\rho)$ the $n-$dimensional ball centred in $x$ and of radius $\rho$.

\begin{proposition}\label{p:1}
Assume \eqref{H0}, \eqref{lip}, \eqref{C)} and the Isaacs' conditions.
Let be $x_j\in G$ such that, taken an  $\epsilon\in [0, \Delta x)$ and a direction $d\in B(0,1)$, the point $x=x_j-\epsilon d\in\Omega$ verifies $v_i(x)=v(x)$ for a certain $i\in\I$. Then the following estimate holds
\begin{equation}\label{rep}
|V_i(x_j)-V(x_j)|\leq 2(C(\Delta x)^q+M\Delta x)
\end{equation}
$C$ as in the previous statement and $M:=\max\{L_{v_i}, i\in\cI\}$ where $L_{v_i}$ is the Lipschitz constant of the function $v_i$.
\end{proposition}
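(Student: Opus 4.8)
The plan is a triangle-inequality estimate that threads the two discrete quantities $V_i(x_j)$ and $V(x_j)$ through the corresponding \emph{continuous} solutions, evaluated both at the node $x_j$ and at the neighbouring point $x=x_j-\epsilon d$; the role of the hypothesis $v_i(x)=v(x)$ is precisely to annihilate the central term of the resulting chain.

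First I would split the quantity to be estimated along the chain $V_i(x_j)\to v_i(x_j)\to v_i(x)\to v(x)\to v(x_j)\to V(x_j)$, so that
\begin{multline*}
|V_i(x_j)-V(x_j)|\leq |V_i(x_j)-v_i(x_j)| + |v_i(x_j)-v_i(x)| \\
+ |v_i(x)-v(x)| + |v(x)-v(x_j)| + |v(x_j)-V(x_j)|.
\end{multline*}
Then I would bound the five terms one by one. The two discrete--continuous error terms $|V_i(x_j)-v_i(x_j)|$ and $|v(x_j)-V(x_j)|$ are each at most $C(\Delta x)^q$ by Theorem \ref{t:1}, applied respectively to the decomposed problem \eqref{HJD} and to the original problem \eqref{HJ} (the constants $C,q$ being common to all these problems under \eqref{H0}--\eqref{lip}). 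The central term $|v_i(x)-v(x)|$ vanishes by hypothesis, since $x$ is assumed to satisfy $v_i(x)=v(x)$, i.e. $x\in\Sigma_i$. For the two Lipschitz terms I would use that $|x_j-x|=\epsilon|d|\leq\epsilon<\Delta x$, because $d\in B(0,1)$ and $\epsilon\in[0,\Delta x)$; hence $|v_i(x_j)-v_i(x)|\leq L_{v_i}\Delta x\leq M\Delta x$, and likewise $|v(x)-v(x_j)|\leq M\Delta x$. Summing the five bounds yields exactly $2C(\Delta x)^q+2M\Delta x=2(C(\Delta x)^q+M\Delta x)$.

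The only step that is not purely mechanical is the Lipschitz estimate for the original solution $v$: since $v$ is not one of the $v_i$ but rather their pointwise minimum, $v=\min_{i\in\I}v_i$ (Theorem \ref{ttt:1}), I would invoke the elementary fact that a pointwise minimum of a finite family of Lipschitz functions is itself Lipschitz with constant at most $\max_{i\in\I}L_{v_i}=M$; this is what legitimises the bound $|v(x)-v(x_j)|\leq M\Delta x$ and guarantees that $M$, and not some larger constant, suffices in the final estimate. Everything else is a direct application of Theorem \ref{t:1} together with the triangle inequality, so I expect no further obstacle.
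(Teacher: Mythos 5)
Your proof is correct and follows essentially the same route as the paper: the identical five-term triangle-inequality chain $V_i(x_j)\to v_i(x_j)\to v_i(x)\to v(x)\to v(x_j)\to V(x_j)$, with the discrete--continuous terms bounded via Theorem \ref{t:1}, the Lipschitz terms by $M\Delta x$, and the central term killed by the hypothesis $v_i(x)=v(x)$. Your extra remark justifying that $v=\min_{i\in\I}v_i$ is Lipschitz with constant at most $M$ fills in a detail the paper leaves implicit, but the argument is otherwise the same.
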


\begin{proof}
It is sufficient to observe that 
\begin{multline*} |V_i(x_j)-V(x_j)|\leq |V_i(x_j)-v_i(x_j)|+|v_i(x_j)-v_i(x)|+|v_i(x)-v(x)|\\+|v(x)-v(x_j)|+|v(x_j)-V(x_j)|
\leq \|V_i(x_j)-v_i(x_j)\|_\infty+\|v(x_j)-V(x_j)\|_\infty\\+|v_i(x_j)-v_i(x)|+|v_i(x)-v(x)|
+|v(x)-v(x_j)|\leq 2C(\Delta x)^q +2 M\Delta x+|v_i(x)-v(x)|
\end{multline*}
since $v$ and $v_i$ are Lipschitz continuous with Lipschitz constant bounded by $M$. From $v_i(x)=v(x)$, \eqref{rep} follows.
\end{proof}

The previous condition gives us a necessary condition for the nodes to belong to a independent subset $\Omega_i$. Note that such condition is verified by the nodes lying in the interior of such set but also by a neighbourhood of the boundary, of thickness depending by the parameters $C$ and $M$. This criteria will be used in the invariant sub-domains reconstruction algorithm; the list of the nodes of $G$ belonging to the approximation of the independent sub-domain $\Sigma_i$ will be denoted  $\overline{\Sigma}_i$. Consequently, the relative approximated set will be the region delimited by $\overline{\Sigma}_i$ (concave hull).\par
Let us define $union(X_1,X_2)$ the vector composed by all the elements present in $X_1$ and $X_2$ and let us call $X_\Gamma$ the vector containing all the indices of the nodes in $G$ corresponding to $\Gamma$. We call also $V(j)$ the $j-$component of the vector $V$.

\bigskip
\begin{center}
{\sc Indipendent Sub-Domains Reconstruction Algorithm (RA).}
\vspace{-0.2cm}
\begin{center}
\line(1,0){290}
\end{center}
\begin{itemize}
\item[- ]Given a grid $G$ of discretization step $\Delta x$ and a collection of vectors such that $union(X_i, i=1,...,m)=X_{\Gamma}$.
\item[1)] (Resolution of auxiliary problems) \\
for $i=1...m$ solve iteratively the problem \\
\hspace{1cm}$V_i=T_i(V_i)$ with $T_i$ defined as \eqref{Tdef} with $\partial G:=X_i$.\\
end
\item[2)] (Check and reconstruction of the value function)\\
If necessary, check numerically \eqref{C)}, \\ then
obtain $V$ as $V:=\min_{i=1...m}\{V_i\}$.
\item[3)] (Reconstruction of the sub-domains)\\
for i=1...m\\
\hspace{1cm} initialize $\overline{\Sigma}_i=X_i$\\
\hspace{1cm}for j=1...N\\
\hspace{2cm} if $|V_i(j)-V(j)|\leq 2(C(\Delta x)^q+M\Delta x)$ \\
\hspace{2cm}then add $x_j$ to vector $\overline{\Sigma}_i$\\
\hspace{1cm}end\\
\hspace{1cm}the $i-$subset is $\bar\Sigma_i$.\\
end
\end{itemize}
\end{center}
\bigskip
Let us underline that, from the computational point of view, the difficult step is only the first one; successive points are faster and with a negligible complexity.  In addition, point $1)$ is easily performed in parallel,  since it consists in a collection of independent problems, reducing the difficulty of resolution.

\begin{remark}
A delicate phase of the algorithm is the choice of the parameters $C$ and $M$. An error in this point would produce two opposite behaviours. Bigger parameters will bring to round up the  independent sub-domains that could even include the whole domain, making worthless the technique. A wrong choice of $C$ or $M$, smaller than the correct one, would nullify our following error analysis. In the test section (Section \ref{test}) we will show as even a not so tight choice of the parameters will produce acceptable approximations of the desired sets in most of situations of interest.
\end{remark}

\begin{remark}
  It is worth to stress an issue about the stopping criterion used in the iterative resolution \eqref{SL} contained in step $1)$. It is clear that, in general, the exact discrete solution will not be reached,  then the stopping criterion used should be compatible with our requests of accuracy. For the case of the semiLagrangian approximation, for a $\lambda>0$, the classical estimate $\|V^n-V^{n+1}\|_{\infty}\leq \frac{1}{1+\lambda h} \|V^{n-1}-V^{n-2}\|_\infty$ brings us a link between the two successive iterations and the distance (in the $L^\infty$ norm) from the discrete solution as 
$$ \|V^n-V\|_{\infty}\leq \sum_{t=n}^\infty\left(\frac{1}{1+\lambda \Delta x}\right)^t\|V^n-V^{n+1}\|_{\infty}$$
then a possible stopping criterion compatible  is 
$$\|V^{n+1}-V^n\|_{\infty}\leq  \epsilon, \quad \epsilon=2\lambda \Delta x (1+\lambda \Delta x)^{n-1}(C(\Delta x)^q+M\Delta x).$$
\end{remark}

\bigskip

The RA builds an approximation of the independent sub-domains. It is guaranteed that such approximation is performed exceeding the desired set, in the sense that $\Sigma_i\subseteq \overline{\Sigma}_i$. Another important property coming from the Proposition \ref{p:1} is that, for two discretization steps $\Delta x_1$ and $\Delta x_2$ such that  $\Delta x_1 \geq \Delta x_2$, the approximate independent sub-domains of a same decomposition of $\Gamma$ have the feature
\begin{equation}\label{incap}
\overline{\Sigma}_i^{\Delta x_1}\supseteq \overline{\Sigma}_i^{\Delta x_2}
\end{equation}
where with $\overline{\Sigma}_i^{\Delta x}$ we intend the discrete independent set obtained performing the RA with discretization space step $\Delta x$. \par

A point to discuss is the relation with the decomposition technique proposed in \cite{cacace2012patchy}. Despite the analogies, in particular the idea of finding a decomposition in subdomains which preserves a certain mutual independence, in general the decomposition obtained can be slightly different. Let us show it with an example.

\begin{example}\label{ex1}
Let us consider the domain $\Omega:=(-1,1)\times \R$ the dynamics 
$$ f(x,a):= a_1, \quad \lambda:=\delta, \quad a=B_2(0,1),$$
running cost $l(x,a)\equiv 1$,  and the set $\Gamma:=\cup_{i=1}^2 \Gamma_i$, with $\Gamma_1:=\{x_1=1\}$, $\Gamma_2:=\{x_1=-1\}$. Let us impose $g:\Gamma\rightarrow \R$ null in $\Gamma_1$ and $\Gamma_2$. It is possible to check that the solution is 
\begin{equation*}
v(x)=\left\{
\begin{array}{ll}
1-\frac{e^{\delta x_1}}{e^\delta}& \hbox{ for }x_1\leq 0\\
-1+\frac{e^{\delta x_1}}{e^\delta}& \hbox{ otherwise. }
\end{array}\right.
\end{equation*}
We can notice how for every choice of $\delta$ the invariant domains relative to the two subtargets are respectively $\{x\in\Omega\;|\; x_1\geq 0\}$ and $\{x\in\Omega\;| \;x_1\leq 0\}$.  \\
\begin{figure}[t]
\begin{center}
\includegraphics[height=4.5cm]{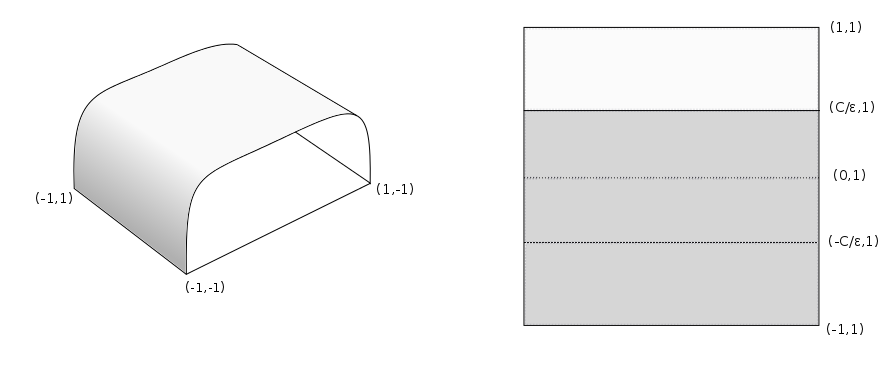}
\caption{Example \ref{ex1}, the flatness of the central region depends on $\delta$, in tones of grey the incorrect division coming from numerical incertitude.}
\end{center}
\end{figure}
For $\delta$ sufficiently small the numerics, although reconstructing correctly according to a certain error bound the approximate value function, will not be able to solve appropriately the synthesis problem. The assignment of the approximated optimal control of the region ``too flat'' will depend on the priority chosen in the computing. So using the \emph{patchy decomposition}, the sub-domains reconstructed will be, for example $\{x\in\Omega\;|\;x_1+\frac{r}{\delta}\geq 0\}$ and $\{x\in\Omega\;|\;x_1+\frac{r}{\delta}\leq 0\}$, $r\in (0,c)$, for a fixed $c$ depending on the computing parameters, which are arbitrarily different (for a generic $\delta$) from the correct division. Differently, our implicit reconstruction will produce $\{x\in\Omega\;|\;x_1+\frac{c}{\delta}\geq 0\}$ and $\{x\in\Omega\;|\;x_1-\frac{c}{\delta}\leq 0\}$ which are larger sets containing the correct decomposition. 
\end{example}

\section{Application to parallel computing} 

In this section we show as the results about the reconstruction of the independent sub-domains can be used to compute in parallel the correct solution of the discrete problem \eqref{SL}. We prove the convergence of the technique, and provide a bound for the numerical error. Roughly speaking the proposal is based on the reconstruction of a collection of independent subsets, computed in parallel on a coarse grid, and successively the computation of the solution on every sub-domain on a fine grid, recovering at the end the result, using the minimum property on the regions of overlapping.\par
Let us state more rigorously the technique.\par
Consider two families of simplices: a \emph{coarse grid} $K$ of discretization step $\Delta x_K$ and a \emph{fine grid} $G$ of step $\Delta x_G$ which both cover the domain $\Omega$, (i.e.  $\overline{\Omega}\subseteq\cup_j S_j\subseteq\cup_j K_j$), call $z_k$, $k=1, ... , N_1$, the nodes of the first triangulation and $x_k$, $k=1,...,N_2$  the nodes of the second triangulation. Triangulations are chosen such that $N_1\ll N_2$. The set of the nodes of $K$ corresponding to $\Gamma $ will be called $Z_\Gamma$.\\
The parallel invariant sub-domains based algorithm is the following:

\par
\begin{center}
\bigskip
{\sc Independent-Sets Algorithm (ISA).}
\vspace{-0.3cm}
\begin{center}
\line(1,0){300}
\end{center}
\begin{enumerate}
\item[- ]Given a grid $K$ and a collection of vectors $Z_i$ such that $$union(Z_i, i=1,...,M)=Z_\Gamma$$
\item[1)](Reconstruction of the approximated independent sub-domains)\\
     Using {\sc RA} get a collection $\overline{\Sigma}_i^{\Delta x_k}$, $i=1,...,M$ subsets of the grid $K$.
\item[2)](Projection on the fine grid)\\
Project $\overline{\Sigma}_i^{\Delta x_K}$ on the grid $G$ getting $\overline{\Sigma}^G_i$ for $i=1,...,M$,
\item[3)](Resolution on the fine grid)\\
for $i=1,...,M$ solve iteratively the problem on $\overline{\Sigma}^G_i$\\
\hspace{1cm}$V_i=T(V_i)$ with $T$ defined as \eqref{Tdef}\\
end
\item[4)](Assembly of the final Solution)\\
for $j=1,...,N_2$ \\
\hspace{1cm}$\overline{V}(j)=min\{V_i(j)|x_j\in \overline{\Sigma}^G_i \}$\\
end
\end{enumerate}

\end{center}
\bigskip

Some observations about the algorithm described above:
\begin{itemize} 
\item computing of the $RA$ at point $1)$ it is not more expansive than a single computation on the coarse grid. $RA$ is an algorithm which can work in parallel, and the number of threads that it needs, are the same requested at point $4)$.\\
\item  The projection is very easy if the grid are chosen to be partially superimposed i.e. every point $z_i\in K$ is also a point of the fine grid $G$; in every case the condition to impose is 
$$ x_j\in \overline{\Sigma}^G_i \Longleftrightarrow x_j\in Con(\{z_j\;|\;j\in \overline{\Sigma}_i^{\Delta x_K}\});$$
where $Con(\cdot)$ is the concave hull of the set, i.e. the union of the simplexes with vertexes in the set. The computational cost of this passage is negligible.
\item It is evident from definitions and from \eqref{incap} that 
\begin{equation}\label{incapsulamento} \Sigma_i\subseteq \overline{\Sigma}^{\Delta x_G}_i\subseteq \overline{\Sigma}^G_i\equiv \overline{\Sigma}^{\Delta x_K}_i. \end{equation}
\end{itemize}
This last observation will guarantee a delicate point about the convergence of the method, as we show in the following proposition:
\begin{proposition}\label{conv}
Assume \eqref{H0}, \eqref{lip}, \eqref{C)} and the Isaacs' conditions.
Called $\overline{V}$ the exact discrete solution of the $ISA$ algorithm (i.e. all $V_i=T_i(V_i)$ are verified exactly) and $V$ the exact solution of \eqref{SL} (i.e. $V=T(V)$ is exactly verified), then there exists a $C>0$ and a $q\in\R^+$ independent from $\Delta_G$ such that
$$ \|\overline{V} -v\|_{\infty}\leq C (\Delta x_G)^q$$
holds. The parameters $C$ and $q$ are the same than in Theorem \ref{t:1}.
\end{proposition}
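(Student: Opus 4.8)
The plan is to reduce the global bound to a node-by-node comparison and then to squeeze $\overline{V}(j)$ between $v(x_j)\pm C(\Delta x_G)^q$. Since $\|\overline{V}-v\|_\infty=\max_{j}|\overline{V}(j)-v(x_j)|$, I fix a node $x_j$ of the fine grid and treat the two inequalities separately. The structural fact I will use throughout is the chain \eqref{incapsulamento}, which yields $\Sigma_i\subseteq\overline{\Sigma}^G_i$ for every $i$: the reconstruction carried out on the coarse grid and projected on $G$ only over-estimates the true independent sets (a consequence of Proposition \ref{p:1}), so no optimal trajectory is ever lost in the restriction. Recall also that, under the present hypotheses, $v=\min_{i\in\I}v_i$ by Theorem \ref{ttt:1} together with \eqref{lip}.

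For the upper bound, the minimum $v(x_j)=\min_i v_i(x_j)$ is attained at some index $i^*$, i.e. $x_j\in\Sigma_{i^*}$; by \eqref{incapsulamento} we have $x_j\in\overline{\Sigma}^G_{i^*}$, so $i^*$ is among the indices entering the assembly step $4)$ and hence $\overline{V}(j)\le V_{i^*}(j)$. It then suffices to show $V_{i^*}(j)\le v(x_j)+C(\Delta x_G)^q$. Here Proposition \ref{pp:1} is the crucial ingredient: the optimal trajectories issued from $x_j\in\Sigma_{i^*}$ remain in $\Sigma_{i^*}\subseteq\overline{\Sigma}^G_{i^*}$ until they reach $\Gamma_{i^*}$, so posing the problem on $\overline{\Sigma}^G_{i^*}$ rather than on all of $\Omega$ does not change the continuous value at the interior nodes of $\Sigma_{i^*}$, where it equals $v_{i^*}=v$. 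Applying Theorem \ref{t:1} (equivalently Lemma \ref{lemma:1}) to the subproblem gives $|V_{i^*}(j)-v(x_j)|\le C(\Delta x_G)^q$, and in particular the required upper bound.

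For the lower bound, I claim $V_i(j)\ge v(x_j)-C(\Delta x_G)^q$ for \emph{every} $i$ with $x_j\in\overline{\Sigma}^G_i$; this immediately gives $\overline{V}(j)=\min_i V_i(j)\ge v(x_j)-C(\Delta x_G)^q$. The claim rests on the monotonicity of the scheme $T$: the subproblem on $\overline{\Sigma}^G_i$ keeps the exact exit cost $g$ on $\Gamma_i$ and imposes on the artificial boundary $\partial\overline{\Sigma}^G_i\setminus\Gamma$ a cost not smaller than the trace of the global solution $V$ (those nodes act as ghost nodes, cf. the definition of $T$ in \eqref{Tdef}). The comparison principle for the monotone operator $T$ then forces $V_i\ge V$ on $\overline{\Sigma}^G_i$, and combining with $\|V-v\|_\infty\le C(\Delta x_G)^q$ from Theorem \ref{t:1} proves the claim. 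Putting the two bounds together, $|\overline{V}(j)-v(x_j)|\le C(\Delta x_G)^q$ uniformly in $j$, with the same $C$ and $q$ as in Theorem \ref{t:1}, which is the assertion.

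The delicate point I expect to be the real obstacle is the claim in the upper bound that restricting the computation to $\overline{\Sigma}^G_{i^*}$ leaves the values on $\Sigma_{i^*}$ unchanged up to the scheme error. At the continuous level this is precisely the independence property of Proposition \ref{pp:1}, but at the discrete level one must verify that the numerical domain of dependence of $T$ at a node of $\Sigma_{i^*}$ never reaches the artificial boundary $\partial\overline{\Sigma}^G_{i^*}\setminus\Gamma$. This is exactly where the collar of thickness $O(\Delta x)$ built into the reconstruction by Proposition \ref{p:1} is needed: it places $\Sigma_{i^*}$ strictly inside $\overline{\Sigma}^G_{i^*}$, so that the one-step stencil $x_i\mapsto x_i-hf(x_i,a,b)$ of the scheme stays within $\overline{\Sigma}^G_{i^*}$ for the relevant nodes and the artificial boundary data cannot contaminate the approximation on $\Sigma_{i^*}$.
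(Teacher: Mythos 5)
Your proof is correct and its skeleton is the same as the paper's: use the inclusion chain \eqref{incapsulamento}, pick the index $i^*$ at which the continuous minimum $v(x_j)=v_{i^*}(x_j)$ is attained, and invoke Lemma \ref{lemma:1}/Theorem \ref{t:1} for the subproblem. The genuine difference is in how the assembly minimum of step $4)$ is handled. The paper simply asserts that $|\overline{V}(x_j)-V_{i}(x_j)|=0$ for that same index $i$, i.e.\ that the discrete minimum is attained exactly where the continuous one is; this is not literally justified, since each $V_k$ is accurate only up to $C(\Delta x_G)^q$ and the minimum in $\overline{V}(j)=\min\{V_k(j)\,|\,x_j\in\overline{\Sigma}^G_k\}$ may well be attained at a different index $k$ with $V_k(j)<V_i(j)$. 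Your two-sided argument repairs exactly this: the upper bound $\overline{V}(j)\le V_{i^*}(j)$ is immediate from $\Sigma_{i^*}\subseteq\overline{\Sigma}^G_{i^*}$, and the lower bound follows from the monotonicity of the semiLagrangian operator $T$ together with the fact that every subproblem carries boundary data not smaller than the global one (exact $g$ on $\Gamma_i$, $+\infty$ at ghost/artificial nodes), whence $V_k\ge V$ on $\overline{\Sigma}^G_k$ for every $k$ and $\overline{V}\ge V\ge v-C(\Delta x_G)^q$ by Theorem \ref{t:1} applied only to the global problem. This comparison-principle ingredient is absent from the paper's proof and makes yours tighter. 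Note, finally, that both proofs rely on the same delicate step, namely applying Theorem \ref{t:1} (through Lemma \ref{lemma:1}) to $V_{i^*}$ computed on the truncated domain $\overline{\Sigma}^G_{i^*}$ rather than on all of $\Omega$: the paper is silent about it, while you flag it explicitly and sketch the correct resolution via Proposition \ref{pp:1} and the $O(\Delta x_K)$ collar guaranteed by Proposition \ref{p:1}, which keeps the numerical domain of dependence of the nodes of $\Sigma_{i^*}$ away from the artificial boundary.
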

\begin{proof}
For the observation \eqref{incapsulamento} we know that the independent sub-domains eventually obtained on the fine grid, should be subsets of the sub-domains we used in the algorithm.\\
Let us take a $x_j\in G$, through Proposition \ref{p:1} and \eqref{incapsulamento} it is assured that there exists at least one index $i\in \{1,...,M\}$  such that $v(x_j)=v_i(x_j)$ (solution of \eqref{HJD}), and $|\overline{V}(x_j)-V_i(x_j)|=0$. Then, using Lemma \ref{lemma:1}
\begin{multline*}
   |\overline{V}(x_j)-v(x_j)|\leq |\overline{V}(x_j)-V_i(x_j)|+|V_i(x_j)-v(x_j)|
\leq\|V_i-v_i\|_\infty\leq C (\Delta x_G)^q
\end{multline*}
for the arbitrariness of the choice of $x_j$ we have the thesis. 
\end{proof}

\subsection{Some examples} \label{test}
In this section we will give some examples of problems solved with ISA. 
The aim of this section is purely descriptive, and our intention is to give an experimental confirmation to the results stated previously. We will show practically that the procedure of the independent domain approximation RA is computational cheap and does not add an excessive number of nodes, even when the coarse grid $K$ consists of a  small number of control points. Together  we will verify that our technique does not add a numerical error with respect to the solution found on the whole domain and we will briefly compare the performances of our proposal to the literature. 
For a rigorous and more detailed comparison  we postpone to future works more computationally oriented.\par
Let us first of all recall the discrete analogue of the $L^\infty$, $L^1$ norms for a vector $X$ of  $N$ elements:
$$\|X\|_{\Delta_\infty} :=\max_{j=1,...,N}|X(j)|, \qquad \|X\|_{\Delta_1} :=\frac{1}{N}\sum_{j=1}^N|X(j)|.$$

\begin{example}[Distance function]

Let us to start with the very easy case shown in Example \ref{ex-1}, which will be useful to observe some general features. Therefore we  consider the Eikonal equation on the set $\Omega:=(-1,1)^2$ with the boundary value fixed to zero on $\Gamma:=\partial \Omega$. This equation models the distance from the boundary of such set. The case considered here will be with $\lambda=1$, it will correspond to a non linear monotone scaling of the solution (this relation is classically shown through the Kruzkov transform see \cite{BarCap97}), so the correct viscosity solution is the function
$$ v(x)=1-\frac{\min\{e^{|x_1|},e^{|x_2|}\}}{e},$$
solving the equation
$$ v(x)+\max_{a\in B(0,1)}\{a\cdot Dv(x)\}=1.$$

\begin{figure}[t]
\begin{center}
\includegraphics[height=5.8cm]{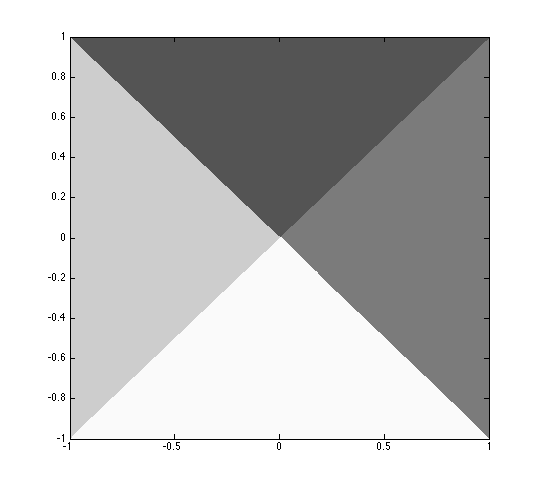}
\includegraphics[height=5.8cm]{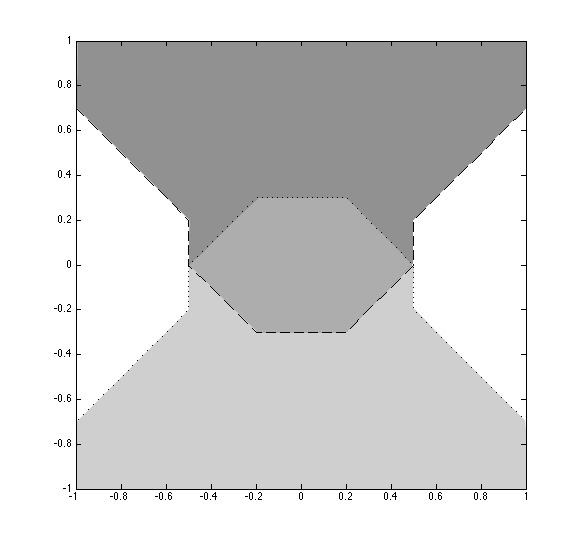}
\vspace{-0.5cm}
\caption{Distance function: exact decomposition and two (of the four) approximated independent subsets found with a coarse grid of $15^2$ points (the third tone of grey in the centre is the superposition area between the two sets).} \label{fig1}
\end{center}
\end{figure}

\begin{table}
\begin{center}
\caption{Distance function: comparison of the accuracy of the decomposition with various discretization steps}\label{tt:1}
\begin{tabular}{{c}|*{2}{c}|{c}|{c}}
         N. of variables  & $\Delta x_K$  &  Time elapsed   &   $\max_i |\overline{\Sigma}_i|/|\Omega|$ & $\max_i |\Sigma_i|/|\Omega|$\\
\hline
\bfseries $5^2$ & 0.4 &  $1\cdot 10^{-3}$ & $50\%$   &   \\
\bfseries $7^2$ & 0.28 & $2\cdot 10^{-3}$ & $43\%$ &  \\
\bfseries $10^2$ & 0.2 &  $4\cdot 10^{-3}$ &  $38\%$ &\\
\bfseries $15^2$  & 0.133&   $2\cdot 10^{-2}$  &  $35\%$  &  $25\%$\\
\bfseries $20^2$ & 0.1 & $5\cdot 10^{-2}$ & $33\%$ &  \\
\bfseries $30^2$  & 0.06 &  $1.01$ &  $30\%$  &\\
\bfseries $40^2$  & 0.05 &  $3 $  &  $29\%$  &   \\
\bfseries $50^2$ & 0.04 & $11$ & $28.3\%$ &  \\
\end{tabular}
\end{center}
\end{table}

\begin{table}
\begin{center}
\caption{Distance function: comparison between the efficiency of the various methods (ND no decomposition, DD domain decomposition, ISA Independent set decomposition)}\label{tt:2}
\begin{tabular}{*2{c}|{c}|*1{c}|{c}}
         N. of variables    &  $\Delta x_K $ & Time  ND   & Time (it) DD&  Time ISA\\
\hline
\bfseries $25^2$  & $0.08$ & $0.13$ &$0.06(2)$& $0.035$      \\
\bfseries $50^2$ & $0.04$ & $7.02$ & $1.2(2) $&$0.68$   \\
\bfseries $75^2$  & $0.026$ & $57.5$ &  $12.2(3)$&$ 4.8$ \\
\bfseries $100^2$  &  $0.02$ &$1.5\cdot 10^3$  & $65.3(3)$& $16.6$  \\
\bfseries $200^2$ & $0.01$& $1.9\cdot 10^5$ & $1.2\cdot 10^4(5)$ &$3\cdot 10^3$   \\
\bfseries $300^2$ & $0.006$& $>10^6$ & $1.8\cdot 10^5(11)$ & $4.6\cdot 10^4$   \\
\end{tabular}
\end{center}
\end{table}

\begin{table}
\begin{center}
\caption{Distance function: approximation error Error in norm $\Delta \infty$ (and $\Delta_1$) }\label{tt:3}
\begin{tabular}{{c}|*{3}{c}}
             &  $50^2 $ &  $100^2$  &    $200^2$\\
\hline
\bfseries original  &  $1.2\cdot 10^{-2} (1.1\cdot 10^{-2})$ &$6.5\cdot 10^{-3} (3.6\cdot 10^{-3})$  &  $2.5\cdot 10^{-3} (1.6\cdot 10^{-3})$  \\
\bfseries 2-subsets  &  $1.2\cdot 10^{-2} (7.2\cdot 10^{-3})$ &$6.5\cdot 10^{-3} (3.7\cdot 10^{-3})$  &  $2.5\cdot 10^{-3} (1.4\cdot 10^{-3})$     \\
\bfseries  4-subsets &  $9\cdot 10^{-3} (7.2\cdot 10^{-3})$ &$4.6\cdot 10^{-3} (3.6\cdot 10^{-3})$  &  $1.4\cdot 10^{-3} (1.3\cdot 10^{-3})$   \\
\bfseries 8-subsets  &  $9\cdot 10^{-3} (7.2\cdot 10^{-3})$ &$4.6\cdot 10^{-3} (3.6\cdot 10^{-3})$  &  $1.4\cdot 10^{-3} (1.3\cdot 10^{-3})$   \\
\end{tabular}
\end{center}
\end{table}

In the following we will refer to a uniform decomposition of the set $\Gamma$, for example in a 2-treads decomposition $\Gamma_1:=[-1,1]\times\{-1\}\cup {-1}\times [-1,1]$,  $\Gamma_2:=[-1,1]\times\{1\}\cup \{1\}\times [-1,1]$, in a 4-treads $\Gamma_1:=[-1,1]\times\{-1\}$, $\Gamma_2:=\{1\}\times[-1,1]$, $\Gamma_3:=[-1,1]\times\{1\}$, $\Gamma_2:=\{-1\}\times[-1,1]$; etc.\par
In this case it is easy also to give an estimation of the constants introduced above, $M=1$, $C=1$ and $\epsilon=10^{-3}$, $q=1/2$. It is evident the fact that the precision of the independent subdomains reconstruction will be affected by the discretization step used in the procedure. In Table \ref{tt:1} there is a comparison, in the case of a 4-subsets decomposition, of such accuracy. The percentage reported is the maximal extension of an approximated subset $\overline{\Sigma}_i$ on the total area of $\Omega$. Evidently in every case, the exact decomposition will be contained in the approximated one. It is worth to underline how, even a very coarse grid (with $10^2$ or $15^2$ elements) the technique is able to provide a sufficiently accurate estimate, giving a good reduction of the dimension of the sub problems with a cost of the precomputing section absolutely negligible. In Figure \ref{fig1} is reported the exact decomposition and two approximation  sets $\Sigma_1$, $\Sigma_3$ with $\Delta x=0.2$.\par

After the decomposition, the problem can be solved separately, and possibly at the same time on each $\overline{\Sigma}_i$.
It is consequential a large gain in term of computational cost, compared to the resolution on the whole domain. We show,  in Table \ref{tt:2}, the time of computation of the resolution in the whole domain ND, compared with a standard domain decomposition methods DD (4 equal sub-domains, no superposition, as described in \cite{camilli1994domain}) where we show also the number of iterations between the sub-domains necessary to reach the solution and our algorithm ISA.\par
As expected our performances are comparable with a single iteration of the DD, which consists in a resolution on a part of the grid containing the $25\%$ of the nodes of $\Omega$. The ISA is performed on a  approximated independent sub-domains decomposition obtained using RA on a $20^2$ grid; accordingly as shown in Table  \ref{tt:1}  the dimension of such decomposed domain will be approximately $33\%$ of the original one. \par

The most important peculiarity of this proposal consists in having a bound for the convergence of the method. Table \ref{tt:3} shows the experimental error in various \emph{fine grids}, in the case of the original problem (solved on the whole domain $\Omega$) or in presence of various decompositions in independent domains. It is important to remark again the fact that the error introduced in not affected by the discretization step  of   the \emph{coarse grid}  for the reconstruction of the subdomains. This is not shown in a table because it would be simply constant through the various choices of $\Delta x_K$. The experimental evidence shows us an error  even smaller than the normal resolution, or at least equal. This is due to the fact that in presence of a ``favourable'' decomposition of the problem, where non differentiability points of the solution lie on a regular point of every decomposed solution, the value in that point is reconstructed as a minimum of a collection of smooth functions. Numerically, this decreases the local numerical error due to the smoothing effect of the interpolation in \eqref{SL}.

\end{example}

\begin{example}[Van Der Pol oscillator]\label{ex3}
Let us pass to a more difficult case. A well known example in the field is the \emph{Van Der Pol oscillator}, here formulated as target problem. We consider $\Gamma:=\partial B(0,\rho)$ (in this case $\rho=0.2$) and $\Omega:=(-1,1)^2\setminus \bar B(0,\rho)$. Also in this case the example is an optimal control problem, then the second control will not be present in the dynamics. The  nonlinear system will be:
\begin{equation*}
f(x,a)=\left(\begin{array}{c} x_2 \\ (1-x_1^2)x_2-x_1+a \end{array} \right).
\end{equation*}
The others parameters of the system are: 
$$\Omega=(-1,1)^2, \quad A=[-1,1], \quad \lambda =1, \quad l(x,y,a)=(x_1^2+x_2^2)^\frac{1}{2}, \quad g(x)\equiv 0.$$
For this problem we do not have  an analytic formula for the solution, then we will consider, (in the error estimation), a numerical solution computed on a very fine grid of $400^2$ elements.

\begin{figure}[t]
\begin{center}
\includegraphics[height=5.8cm]{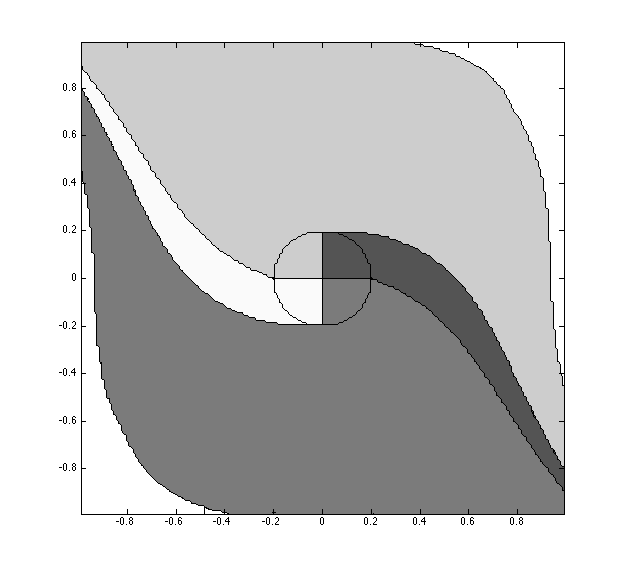}
\includegraphics[height=5.8cm]{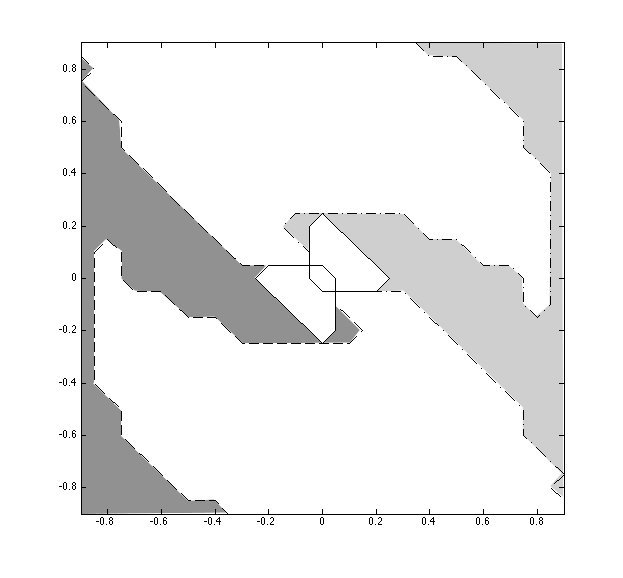}
\vspace{-0.5cm}
\caption{Van Der Pol: exact decomposition and two (of the four) approximated independent subsets found with a coarse grind of $15^2$ points.} \label{fig2}
\end{center}
\end{figure}
\end{example}

\begin{table}
\begin{center}
\caption{Van Der Pol: comparison of the accuracy of the decomposition with various discretization steps}\label{tt:4}
\begin{tabular}{{c}|*{2}{c}|{c}|{c}}
         N. of variables  & $\Delta x_K$  &  Time elapsed   &   $\max_i |\overline{\Sigma}_i|/|\Omega|$ & $\max_i |\Sigma_i|/|\Omega|$\\
\hline
\bfseries $5^2$ & 0.4 &  $1.4\cdot 10^{-3}$ & $62\%$   &   \\
\bfseries $10^2$ & 0.2 &  $0.011$ &  $55\%$ &\\
\bfseries $20^2$ & 0.1 & $0.103$ & $47\%$ &  42.2\%\\
\bfseries $30^2$  & 0.06 &  $1.47$ &  $45\%$  &\\
\bfseries $40^2$  & 0.05 &  $5.6 $  &  $44.6\%$  &   \\
\bfseries $50^2$ & 0.04 & $16.3$ & $44.1\%$ &  \\
\end{tabular}
\end{center}
\end{table}

\begin{table}
\begin{center}
\caption{Van Der Pol: approximation error Error in norm $\Delta\infty$ (and $\Delta_1$) }\label{tt:5}
\begin{tabular}{{c}|*{3}{c}}
             &  $50^2 $ &  $100^2$  &    $200^2$\\
\hline
\bfseries original  &  $0.09 (0.07)$ &$0.03 (0.01)$  &  $0.01 (6\cdot 10^{-3})$  \\
\bfseries 2-subsets  &  $0.09 (0.07)$ &$0.03 (0.01)$  &  $0.01 (6\cdot 10^{-3})$    \\
\bfseries  4-subsets &  $0.09 (0.07)$ &$0.03 (0.01)$  &  $0.01 (6\cdot 10^{-3})$   \\
\bfseries 8-subsets  &  $0.09 (0.07)$ &$0.03 (0.01)$  &  $0.01 (6\cdot 10^{-3})$  \\
\end{tabular}
\end{center}
\end{table}

We consider a division of the target in ``slices of a cake'', meaning that a 2-parts division will be $\Gamma_1:=\{x\in B(0,0.2)| x_2\geq 0\}$, $\Gamma_2:=\{x\in B(0,0.2)| x_2\leq 0\}$ and a 4-parts, $\Gamma_1:=\{x\in B(0,0.2)| x_1\geq 0, x_2\geq 0\}$, $\Gamma_2:=\{x\in B(0,0.2)| x_1\geq 0,  x_2\leq 0\}$, $\Gamma_3:=\{x\in B(0,0.2)| x_1\leq 0, x_2\leq 0\}$, $\Gamma_4:=\{x\in B(0,0.2)| x_1\leq 0,  x_2\geq 0\}$, etc.\par
 The estimation of the constants will be in this case less elementary: we will choose in this example $C=1$, $M=1$, $\epsilon=10^{-3}$, $q=\frac{3}{4}$.\par
 In Figure \ref{fig2} is shown a comparison between the exact division in sub-domains and two approximated sets ($\overline{\Sigma}_1$, $\overline{\Sigma}_3$). An additional difficulty in this case consists in the fact that there are some points of the domain not reachable by the trajectories without exiting from the computational domain (in white in the l.h.s. figure). These points are automatically included in each approximation  but, although they contribute to enlarge every subset, they do not create serious problems to the procedure.\par
In Table \ref{tt:4} is shown the accuracy of the 4-independent subset reconstruction with various discretization steps. In this case it is possible to see an innate limitation of the efficacy of such decomposition for  parallel computing: the exact division in independent subset is not balanced, then the reduction of dimension in the greater subset will be less considerable. In some  cases this problem could even nullify the efficacy of the method (we can obtain a decomposition in some empty sets and the whole $\Omega$), we will discuss this point in the conclusions section.\par
As already remarked this decomposition guarantees a rate of convergence to the solution equal to the resolution on the whole domain. The experimental evidence in Table \ref{tt:5} shows something more, with an error (both in $\Delta_\infty$ than in $\Delta_2$ norm) constant, in its significant figures, in the various cases of decomposition. This is the typical situation: simply the decomposition does not affect the convergence of the numerical method.

\begin{example}[A pursuit-evasion game]\label{ex4}
Let us now to pass to a decomposable differential game. We will consider a  pursuit evasion game, where two agents have the opposite goal to reduce/postpone the time of capture. The dynamics considered are the following:

\begin{figure}[t]
\begin{center}
\includegraphics[height=5.2cm]{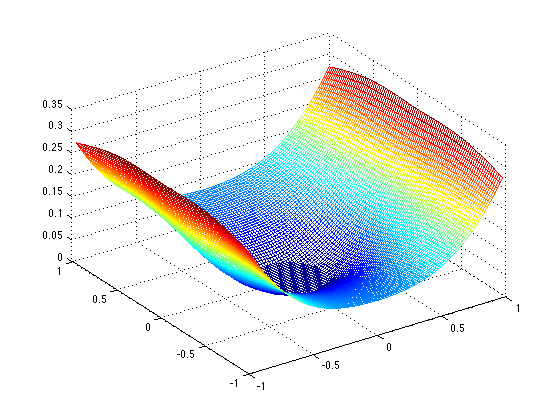}
\includegraphics[height=5.2cm]{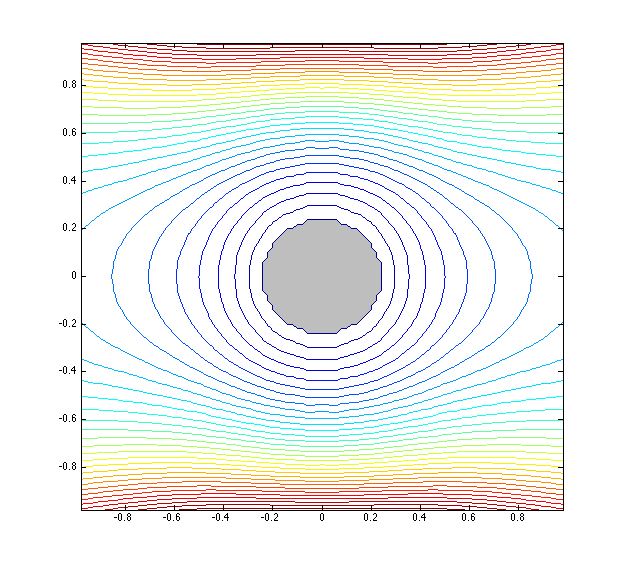}
\vspace{-0.5cm}
\caption{A pursuit evasion game: approximated value function of the differential game presented in Example \ref{ex4}} \label{fig4}
\includegraphics[height=5.6cm]{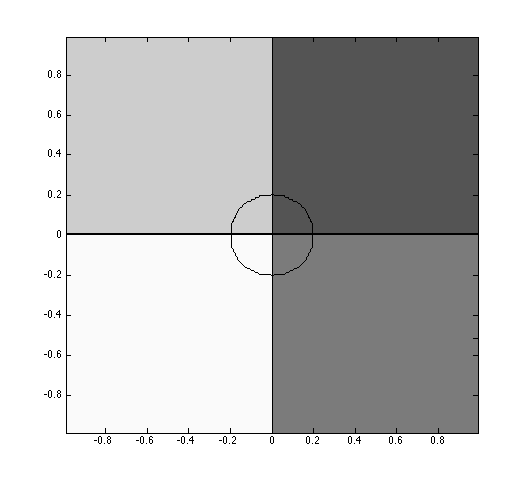}
\includegraphics[height=5.6cm]{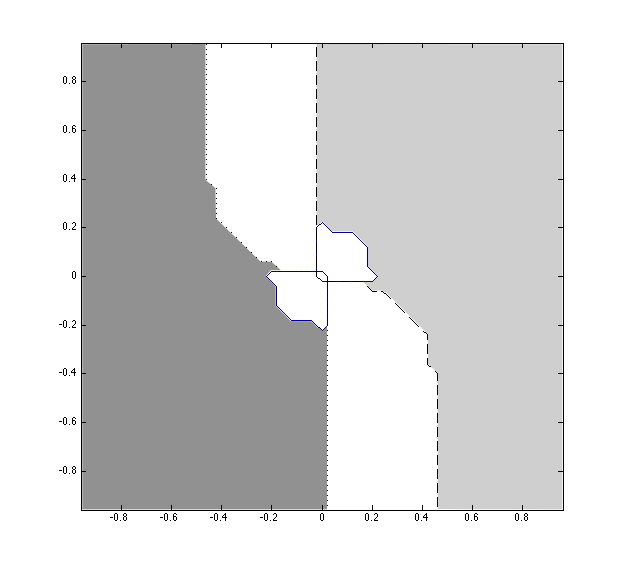}
\vspace{-0.5cm}
\caption{A pursuit evasion game: exact decomposition and two (of the four) approximated independent subsets found with a coarse grind of $40^2$ points.} \label{f:pe}
\end{center}
\end{figure}

\begin{equation*}
  f(x,a,b) :=
\left(\begin{array}{c}
f_1(x) (a_1-b_1)\\
f_2(x) (a_2-b_2) \end{array} \right)
\end{equation*}
where the functions $f_1$, $f_2$ are $f_1(x):=x_2+1$ and $f_2(x):=1$. The running cost $l(x,a,b):=x_1+0.1$. This is a modification of the classical pursuit evasion game on a plane to put in evidence another aspect of the technique, as will be clear in the following. The controls are taken in the unit ball for the pursuer $A=B(0,1)$ and $B=B(0,1/2)$ for the evader. The capture happens when the trajectory is driven to touch the small ball $B(0,\rho)$, ($\rho = 0.2$, in this case), then the set $\Gamma$ will be, as in the previous example $\Gamma:=\partial B(0,0.2)$.\par

\begin{table}
\begin{center}
\caption{A pursuit evasion game: comparison of the accuracy of the decomposition with various discretization steps}\label{tt:5}
\begin{tabular}{{c}|*{2}{c}|{c}|{c}}
         N. of variables  & $\Delta x_K$  &  Time elapsed   &   $\max_i |\overline{\Sigma}_i|/|\Omega|$ & $\max_i |\Sigma_i|/|\Omega|$\\
\hline
\bfseries $5^2$ & 0.4 &  $ 10^{-3}$ & $60\%$   &   \\
\bfseries $10^2$ & 0.2 &  $0.008$ &  $46\%$ &\\
\bfseries $30^2$  & 0.06 &  $1.38$ &  $38\%$  &25\%\\
\bfseries $50^2$ & 0.04 & $15.9$ & $36.1\%$ &  \\
\end{tabular}
\end{center}
\end{table}

It is possible to show that the Hamilton-Jacobi equation associated to this problem verifies the decomposability condition \eqref{C)}; the easiest way to do it is to consider the norm (it is possible because $|f_i(x)|>0$ for $i=1,2$)
$$ \|p\|_*:=\max_{a\in B(0,1)} \left(\begin{array}{c} f_1(x)\\ f_2(x) \end{array} \right) a^T\cdot p, $$
the Hamiltonian associated is equivalent to 
$$H(x,p):=\|p\|_*-\frac{\|p\|_*}{2}-(x_1^2+0.1)$$
evidently convex everywhere with respect to $p$; then \eqref{C)} is automatically verified.\par
The value function of the game is shown in Figure \ref{fig4}. The main point is that the function is very flat along the axis $x_2=0$, this will produce a critical effect in the sets approximation, shown in Figure \ref{f:pe} and in Table \ref{tt:5}, in this test, the parameter are fixed as $C=1$, $M=3$, $q=1/2$. The convergence to the exact division in independent subsets will be very slow.\par
\end{example}

\section{Conclusions}
We have shown a constructive manner to obtain a decomposition of the domain of a Hamilton Jacobi equation verifying condition \eqref{C)} in independent subsets  which have the property of being computed independently from each others. The procedure resumes some general ideas already presented in \cite{cacace2012patchy}, clarifying the theoretical background, enlarging the class of the Hamilton-Jacobi equations where the technique is relevant, proving the convergence of the parallel algorithm derived ISA and producing some estimates for the error. \par
A detailed evaluation of the performances of ISA are still an open question postponed to a forthcoming work. Despite that, we can expect results similar to \cite{cacace2012patchy}, since as shown in the Section \ref{test}, our pre-computing step gives a division in sub-domains sufficiently close to a partition. Anyway, the main advantage of our proposal is in the guarantee to converge to the correct solution with an error of the order $O(\Delta x_G)$.
\par

Some further improvements can be adapted to the technique. The critical occurrence shown in Example \ref{ex3}, about the balance of the dimension of the subsets can be solved with a recursive refinement of the division of $\Gamma$, producing in some few steps a more equilibrate division. More unavoidable the case presented in example \ref{ex4}. In that case it is, for the moment, impossible to obtain a satisfactory reduction of the dimension of the decomposed domains without solving the problem on a sufficiently fine grid. 


\section*{Acknowledgements}
This work was partially supported by the European Union under the 7th Framework Programme FP7-PEOPLE-2010-ITN SADCO, Sensitivity Analysis for Deterministic Controller Design.

\def\cprime{$'$} \def\cprime{$'$} \def\cprime{$'$} \def\cprime{$'$}
  \def\cprime{$'$}

\end{document}